\numberwithin{equation}{section}
\theoremstyle{plain}
\newtheorem{thm}{Theorem}[section]
\newtheorem{lemma}[thm]{Lemma}
\theoremstyle{definition}
\theoremstyle{remark}
\newtheorem{remark}{Remark}[section]
\def\Re{\mathop{\rm Re}\nolimits}
\def\Im{\mathop{\rm Im}\nolimits}
\newcommand{\ul}{\underline}
\newcommand{\sgn}{\text{\rm{sgn}}}
\newcommand{\ess}{\text{\rm{ess}}}
\newcommand{\ac}{\text{\rm{ac}}}
\newcommand{\s}{\text{\rm{s}}}
\DeclareMathOperator*{\esssupp}{ess\,supp}
\DeclareMathOperator*{\supp}{supp}
\DeclareMathOperator*{\tr}{tr}
\DeclareMathOperator*{\Int}{Int}
\title[Square-summable variation]{Square-summable variation and absolutely continuous spectrum}
\author{Milivoje Lukic}
\date\today
\email{milivoje.lukic@rice.edu}
\keywords{OPUC, CMV matrix, bounded variation, absolutely continuous spectrum}
\subjclass[2010]{47B36,42C05,39A70}
\begin{document}
\begin{abstract}
Recent results of Denisov~\cite{Denisov09} and Kaluzhny--Shamis~\cite{KaluzhnyShamis12} describe the absolutely continuous spectrum of Jacobi matrices with coefficients that obey an $\ell^2$ bounded variation condition with step $p$ and are asymptotically periodic. We extend these results to orthogonal polynomials on the unit circle. We also replace the asymptotic periodicity condition by the weaker condition of convergence to an isospectral torus and, for $p=1$ and $p=2$, we remove even that condition.
\end{abstract}

\maketitle

\section{Introduction}

Let $\mu$ be a Borel probability measure on the unit circle $\partial\mathbb{D}$, whose support is not a finite set. Orthonormalizing the sequence $1, z, z^2, \dots$ with respect to $\mu$ leads to the sequence of orthonormal polynomials $\varphi_n(z)$, $n= 0,1,2,\dots$. They, and the reversed polynomials
\begin{equation}\label{1.1}
\varphi^*_n(z) = z^n \overline{\varphi_n(1/\bar z)},
\end{equation}
obey the Szeg\H o recursion relation, which can be written in matrix form as
\[
\begin{pmatrix}
\varphi_{n+1}(z) \\
\varphi^*_{n+1}(z)
\end{pmatrix}
=A(\alpha_n,z) \begin{pmatrix}
\varphi_n(z) \\
\varphi^*_n(z)
\end{pmatrix},
\]
where $\alpha_n\in \mathbb{D}$ is called a Verblunsky coefficient and
\[
A(\alpha,z) = \frac 1{\sqrt{1-\lvert \alpha\rvert^2}} \begin{pmatrix} z & - \bar \alpha \\ - \alpha z & 1 \end{pmatrix}.
\]
By Verblunsky's theorem, this determines a 1-1 correspondence between the measure $\mu$ and its sequence of Verblunsky coefficients $\alpha = \{\alpha_n\}_{n=0}^\infty \in \mathbb{D}^\infty$. Conversely, one may start from the sequence $\alpha$ and construct a unitary  five-diagonal matrix, called a CMV matrix, whose canonical spectral measure is precisely $\mu$; see \cite{CMV,OPUC1} for details.

If $\mu$ has the Lebesgue decomposition
\[
d\mu = w(\theta) \frac{d\theta}{2\pi} + d\mu_\s,
\]
the main goal will be to describe the essential support of $w(\theta)$, defined as
\[
\Sigma_\ac(\mu) = \{ e^{i\theta} \in \partial\mathbb{D} \mid w(\theta) > 0\}
\]
(or, more precisely, the equivalence class up to sets of Lebesgue measure zero of this set). The topological support of the absolutely continuous part of $\mu$ is then well known to be
\[
\supp (w dx) = \overline{ \Sigma_\ac(\mu)  }^\ess,
\]
where $\bar B^\ess$ denotes the essential closure of $B$, i.e. the set of $e^{i\theta} \in \partial \mathbb{D}$ such that $\lvert \{ e^{i\phi}\in B \mid \phi \in (\theta-\epsilon, \theta+\epsilon)\}  \rvert >0$ for all $\epsilon >0$. This set is exactly the absolutely continuous spectrum of the corresponding CMV matrix; see \cite{GesztesyMakarovZinchenko08} for an expository discussion.

This paper focuses on Verblunsky coefficients such that for some $p \in \mathbb{N}$,
\begin{equation}\label{1.2}
\sum_{n=0}^\infty \lvert \alpha_{n+p} - \alpha_n \rvert^2 < \infty.
\end{equation}
A conjecture, made by Simon \cite[Conjecture 12.1.12]{OPUC2} for $p=1$ and by Last \cite{Last07} for discrete Schr\"odinger operators, postulates that  \eqref{1.2} together with
\[
\lim_{n\to \infty} \alpha_n = 0
\]
 implies that $\esssupp w = \partial \mathbb{D}$. All the previously known results are in the setting of Jacobi matrices: a significant partial result was shown by Kupin~\cite{Kupin05}, and the full result for discrete Schr\"odinger operators was proved by Denisov~\cite{Denisov09} (who also proved the result for continuum Schr\"odinger operators \cite{Denisov02}). The method of \cite{Denisov09} was generalized by Kaluzhny--Shamis~\cite{KaluzhnyShamis12} to asymptotically periodic Jacobi matrices, with the result that the a.c.\ spectrum is equal to the essential spectrum for the limiting periodic sequence. A later version of the conjecture, by Breuer--Last--Simon~\cite[Conjecture 9.5]{BreuerLastSimon10}, concerns the situation when the asymptotic periodicity condition is removed. Here a sequence $\alpha$ is asymptotically periodic if for every $n\in \mathbb{N}_0$, the limit 
\begin{equation}\label{1.3}
\lim_{k\to\infty} \alpha_{kp+n} = \beta_n
\end{equation}
exists.

In this paper, we extend the method and results of \cite{Denisov09,KaluzhnyShamis12} to orthogonal polynomials on the unit circle. We also generalize those results, relaxing the condition \eqref{1.3}, which we consider the main contribution of this paper. The corresponding results for Jacobi matrices will be discussed in a forthcoming joint paper with Yoram Last~\cite{LastLukic}.

To motivate our goal of relaxing the condition \eqref{1.3}, note that existence of the limit in \eqref{1.3} does not follow from \eqref{1.2}; rather, it is an additional technical assumption. In contrast, the corresponding statement for the $\ell^1$ variation condition
\begin{equation}\label{1.4}
\sum_{n=0}^\infty \lvert \alpha_{n+p} - \alpha_n \rvert < \infty
\end{equation}
instead of \eqref{1.2} was proved by Golinskii--Nevai~\cite{GolinskiiNevai01}, who used the condition \eqref{1.3}; however, unlike \eqref{1.2}, \eqref{1.4} implies existence of the limit \eqref{1.3}, so in that setting, \eqref{1.3} was not an additional assumption.

In the results that follow, it will be convenient to assume
\begin{equation}\label{1.5}
\sup_{n\to\infty} \lvert \alpha_n \rvert < 1.
\end{equation}
There is no loss in this assumption, since by Rakhmanov's lemma  \cite{Rakhmanov83}, \cite[Theorem 4.3.4]{OPUC2},  $\sup_{n\to\infty} \lvert \alpha_n \rvert = 1$ implies $\esssupp w = \emptyset$.

For small $p$, we can describe $\esssupp w$ without any convergence condition:

\begin{thm}\label{T1.1}
Let \eqref{1.2} hold for $p=1$ and assume \eqref{1.5}. Then
\begin{equation}\label{1.6}
\Sigma_\ac(\mu) = \{e^{i\theta}\mid \theta \in [2 \arcsin A, 2\pi - 2 \arcsin A]\},
\end{equation}
where $A = \limsup_{n\to\infty} \lvert\alpha_n \rvert$. Moreover, for any closed arc $I \subset \Int(\Sigma_\ac(\mu) \setminus \{1\})$,
\begin{equation}\label{1.7}
\int_I \log w(\theta) \frac{d\theta}{2\pi} > -\infty.
\end{equation}
\end{thm}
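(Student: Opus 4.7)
The strategy treats the two inclusions defining $\Sigma_\ac(\mu)$ separately, with the Szeg\H{o} bound \eqref{1.7} deduced from the quantitative form of the forward inclusion. Write the open arc in \eqref{1.6} as $\mathcal{A} := \{e^{i\theta} : \sin(\theta/2) > A\}$. The reverse inclusion $\Sigma_\ac(\mu) \subseteq \overline{\mathcal{A}}$ is comparatively standard: for $\theta$ in the complementary open arc $\{\sin(\theta/2) < A\}$, extract a subsequence $n_k$ with $\lvert\alpha_{n_k}\rvert \to A$, use Cauchy--Schwarz on \eqref{1.2} to produce scales $L_k \to \infty$ with $\sup_{\lvert n - n_k\rvert \leq L_k} \bigl|\lvert\alpha_n\rvert - A\bigr| \to 0$, and conclude that on these windows the transfer matrices $A(\alpha_n, e^{i\theta})$ are uniformly hyperbolic, producing exponential growth of solutions on arbitrarily long scales. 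A standard Jitomirskaya--Last / subordinacy criterion for OPUC then excludes such $\theta$ from $\Sigma_\ac(\mu)$.

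The forward inclusion is the novel part, since unlike \cite{Denisov09, KaluzhnyShamis12} no fixed asymptotic reference sequence is available. The key quantitative statement is: for any closed subarc $I \subset \mathcal{A}$, the Szeg\H{o} transfer matrices satisfy $\sup_{n, \theta \in I} \lVert T_n(e^{i\theta}) \rVert \leq C_I < \infty$. To prove it, fix $\eta > 0$ with $\inf_{\theta \in I} \sin(\theta/2) > A + 2\eta$ and $N$ with $\lvert\alpha_n\rvert \leq A + \eta$ for $n \geq N$. For each such $n$ and $\theta \in I$, the matrix $A(\alpha_n, e^{i\theta})$ is strictly elliptic with eigenvectors $v_\pm(\alpha_n, \theta)$ depending smoothly on $\alpha_n$. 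Introduce modified Pr\"ufer variables $(R_n, \eta_n)$ by expanding the Szeg\H{o} solution in the local eigenbasis $\{v_\pm(\alpha_n, \theta)\}$. The Szeg\H{o} recursion rewritten in these variables then takes the schematic form
\[
\log R_{n+1}^2 - \log R_n^2 = \Re\bigl[(\alpha_{n+1} - \alpha_n)\, e^{2i\eta_n}\, H_n(\theta)\bigr] + O(\lvert\alpha_{n+1} - \alpha_n\rvert^2),
\]
with $\eta_{n+1} - \eta_n \geq c_I > 0$ uniformly in $n$ and $\theta \in I$. The quadratic error is summable by \eqref{1.2}. For the oscillatory linear term I would apply Abel summation against $\alpha_{n+1} - \alpha_n$: the telescoped factor $e^{2i\eta_n}$ is bounded, its differences are $O(\lvert\alpha_{n+1} - \alpha_n\rvert)$, and Cauchy--Schwarz with \eqref{1.2} controls the boundary and remainder sums. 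This yields $\sup_n \log R_n = O_I(1)$, hence the required transfer matrix bound.

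The Szeg\H{o} integral bound \eqref{1.7} is then a standard consequence of the uniform bound $\lVert T_n(e^{i\theta}) \rVert^2 \leq C_I$ on $I$: using $\lvert\varphi_n(e^{i\theta})\rvert^2 + \lvert\varphi_n^*(e^{i\theta})\rvert^2 \leq 2 \lVert T_n(e^{i\theta}) \rVert^2$, orthonormality of $\varphi_n$, and the standard semicontinuity formulation of Szeg\H{o}'s theorem on arcs, one obtains
\[
\int_I \log w(\theta) \frac{d\theta}{2\pi} \geq -\log(2 C_I^2) + o(1) > -\infty.
\]
Combined with $\Sigma_\ac(\mu) \subseteq \overline{\mathcal{A}}$ and, conversely, $\Sigma_\ac(\mu) \supseteq I$ for every closed $I \subset \mathcal{A}$ (via a Last--Simon type equivalence between bounded solutions and positive weight), this yields \eqref{1.6}.

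The main obstacle will be executing the Abel summation in the Pr\"ufer analysis cleanly. In \cite{Denisov09, KaluzhnyShamis12} the eigenbasis $v_\pm$ is fixed by the asymptotic reference, so the change-of-frame between successive steps produces no extra error; here both $\alpha_n$ and the basis move with $n$, and the change-of-basis contributes additional $O(\alpha_{n+1} - \alpha_n)$ corrections to the recursions for both $R_n$ and $\eta_n$. One must verify that these extra terms either telescope or feed into an $\ell^2$-summable sum under Cauchy--Schwarz, and that all bounds are uniform in $\theta$ on $I$ --- the constants $c_I$ and $C_I$ degenerate as $I$ approaches the endpoints of $\mathcal{A}$, which forces \eqref{1.7} to be stated on strict subarcs.
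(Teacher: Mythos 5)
Your forward inclusion rests on the claim $\sup_{n,\,\theta\in I}\lVert T_n(e^{i\theta})\rVert\le C_I<\infty$ on closed subarcs, and this is where the argument breaks. First, the claim is internally inconsistent with your own setup: you assert $\eta_{n+1}-\eta_n\ge c_I>0$ (the Pr\"ufer phase advances by a definite rotation each step, as it must on an elliptic arc), yet the Abel summation requires the differences of $e^{2i\eta_n}$ to be $O(\lvert\alpha_{n+1}-\alpha_n\rvert)$; in fact $\lvert e^{2i\eta_{n+1}}-e^{2i\eta_n}\rvert$ is of order one, so the summation by parts leaves a remainder $\sum_n\lvert\alpha_{n+1}\rvert\,\lvert e^{2i\eta_{n+1}}H_{n+1}-e^{2i\eta_n}H_n\rvert$ which is not summable unless $\alpha\in\ell^1$. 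More fundamentally, a sum $\sum(\alpha_{n+1}-\alpha_n)e^{2i\eta_n}H_n$ of an $\ell^2$ (but not $\ell^1$) sequence against a phase rotating at a fixed rate cannot be bounded pointwise in $\theta$: at resonance points of $I$ the oscillation correlates with the phase of $\alpha_{n+1}-\alpha_n$ and the sum genuinely diverges, producing unbounded (even sub-exponentially growing) transfer matrices at isolated $z_0\in I$, with $w(z_0)=0$. This is compatible with \eqref{1.7} --- which is an integrated statement --- but fatal to any pointwise bound, and it is precisely the obstruction that forces the architecture of \cite{Denisov09,KaluzhnyShamis12} and of this paper.

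The paper's proof never bounds transfer matrices on $\partial\mathbb{D}$. It works strictly inside the disk, where $\lvert\lambda_n^{-1}(z)\rvert\ge 1+C(1-\lvert z\rvert)$ supplies damping, so Denisov's product lemma (Theorem~\ref{T6.1}) only yields the bound \eqref{6.5} with an error $C(1-\lvert z\rvert)\sqrt{l-k}$ that degenerates on the boundary; the harmonic-function Lemma~\ref{L6.4} then converts the interior lower bound $f_N\ge -C/(1-\lvert z\rvert)$ together with $\int_I f_N^+\le C$ into $\int_I f_N^-\le B$, giving $\int_I\log w^N>-C$ uniformly for the eventually-periodic approximants $\mu^N$, and entropy semicontinuity passes this to $\mu$. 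Theorem~\ref{T1.1} is then a two-line corollary of Theorem~\ref{T1.4}: for $p=1$ one computes $L(e^{i\theta})=2\lvert\cos(\theta/2)\rvert/\sqrt{1-A^2}$ and applies \eqref{1.14b}. Your reverse inclusion is also shakier than needed --- exponential growth on sparse windows does not by itself trigger a subordinacy exclusion --- but it is easily repaired by the route the paper takes: the windows you construct produce a constant right limit of modulus $A$ with $\lvert\Delta^{(r)}(e^{i\theta})\rvert>2$, and the Last--Simon right-limit theorem excludes such $\theta$ from $\Sigma_\ac(\mu)$.
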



\begin{thm}\label{T1.2}
Let \eqref{1.2} hold for $p=2$ and assume \eqref{1.5}. Then
\begin{equation}\label{1.8}
\Sigma_\ac(\mu) = \left\{ e^{i\theta} \in \partial \mathbb{D} \Bigm\vert - A_+ < \cos \theta <  A_- \right\} ,
\end{equation}
where
\[
A_\pm  = \liminf_{m\to\infty} \left( \rho_{2m}\rho_{2m+1}  \pm \Re(\alpha_{2m} \bar \alpha_{2m+1}) \right).
\]
Moreover, \eqref{1.7} holds for any closed arc $I \subset \Int (\Sigma_\ac(\mu) \setminus  \{-1, 1\})$.
\end{thm}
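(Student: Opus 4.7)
The plan is to parallel Theorem~\ref{T1.1}, adapted to the two-step structure. Consider the monodromy matrix $T_n(z) = A(\alpha_{2n+1},z)\,A(\alpha_{2n},z)$. A direct computation gives, at $z=e^{i\theta}$,
\[
z^{-1}\tr T_n(z) = \frac{2}{\rho_{2n}\rho_{2n+1}}\bigl(\cos\theta + \Re(\alpha_{2n}\bar\alpha_{2n+1})\bigr),
\]
so the strict inequality $-A_+<\cos\theta<A_-$ is equivalent to the existence of $\delta>0$ and $N_0$ with $|z^{-1}\tr T_n(z)|\le 2-\delta$ for all $n\ge N_0$; that is, the $T_n(z)$ are uniformly elliptic from some index on, with eigenvalues separated from coalescence.

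For the inclusion $\{e^{i\theta}:-A_+<\cos\theta<A_-\}\subset\Sigma_\ac(\mu)$ together with \eqref{1.7}, fix a closed arc $I$ compactly contained in this open set and disjoint from $\{-1,1\}$. On $I$, each $z^{-1/2}T_n(z)$ is a uniformly elliptic $SU(1,1)$ element, which can be conjugated into rotation normal form by an $SU(1,1)$ diagonalizer $U_n(z)$ depending smoothly on $z\in I$ and on $(\alpha_{2n},\alpha_{2n+1})$. The hypothesis \eqref{1.2} with $p=2$ says exactly that the even and odd subsequences $(\alpha_{2m})_m$ and $(\alpha_{2m+1})_m$ each obey $\ell^2$ variation of step~$1$, and this transfers via the smooth conjugation to $\sum_n \|U_n-U_{n-1}\|_{C(I)}^2 < \infty$. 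Telescoping, the $M$-step transfer matrix product takes the form of $U_M$ times an alternating product of rotations $R(\gamma_n(z))$ interlaced with correctors $W_n(z)=U_n^{-1}U_{n-1}$, $\sum_n\|W_n-I\|_{C(I)}^2<\infty$, times $U_0^{-1}$. This exhibits the cocycle as a square-summable perturbation of a drifting rotation cocycle, which is essentially the setting handled in Theorem~\ref{T1.1}. Running the Pr\"ufer-variable argument of that theorem on $I$ should then yield an $L^2(I)$ bound on $\|\prod T_n\|$ which, via a Szeg\H o-type variational estimate, gives \eqref{1.7} and hence $w>0$ a.e.\ on $I$; exhausting the region by such arcs gives the inclusion.

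For the reverse inclusion $\Sigma_\ac(\mu)\subset\{e^{i\theta}:-A_+\le\cos\theta\le A_-\}$: if $\cos\theta$ lies outside the closed interval, then by the liminf characterization there exist $c>0$ and a subsequence $n_k\to\infty$ with $T_{n_k}(e^{i\theta})$ uniformly hyperbolic (spectral radius $\ge 1+c$). The $\ell^2$ step-$2$ variation keeps $T_n(e^{i\theta})$ close to $T_{n_k}(e^{i\theta})$ for $n$ in a widening block around each $n_k$, so the hyperbolic behavior persists over windows of increasing length, and a Jitomirskaya--Last-style subordinacy argument, applied block-wise, produces a subordinate solution at $e^{i\theta}$, excluding a.c.\ spectral mass.

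The principal obstacle lies in the forward direction: in the asymptotically periodic case of \cite{KaluzhnyShamis12} the rotations $R(\gamma_n)$ stabilize and the $U_n$ converge, so the cocycle is literally an $\ell^2$ perturbation of a fixed rotation power and the Pr\"ufer analysis proceeds in a single fixed coordinate frame. In our setting both $R(\gamma_n)$ and $U_n$ drift, and one must verify that this drift does not corrupt the Pr\"ufer analysis nor the uniformity of estimates on $I$, in particular as $I$ is taken closer to $\pm 1$, where the $SU(1,1)$ diagonalization degenerates. Controlling these drifts uniformly, while still reducing cleanly to the $p=1$ framework of Theorem~\ref{T1.1}, is the main novelty of the argument.
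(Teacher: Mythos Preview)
Your trace computation and the identification of $\{L<2\}$ with $\{-A_+<\cos\theta<A_-\}$ are correct and match the paper. But the rest of the proposal has a structural problem: you are trying to reduce to ``the Pr\"ufer-variable argument of Theorem~\ref{T1.1}'', and no such argument exists independently in this paper. Theorems~\ref{T1.1} and \ref{T1.2} are both proved as five-line corollaries of the general Theorem~\ref{T1.4}: one computes $\Delta_m(e^{i\theta})$ explicitly, reads off the set $\{L<2\}$, notes that $\{L=2\}$ is finite so Lemma~\ref{L7.1} applies, and invokes Theorem~\ref{T1.4}. There is no separate $p=1$ Pr\"ufer machine to plug into.

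What you have actually sketched in the forward direction---diagonalize the $p$-step transfer matrices, transfer the $\ell^2$ variation to the diagonalizers $U_n$, and analyze the resulting product of rotations interlaced with $\ell^2$-small correctors---is precisely the skeleton of the proof of Theorem~\ref{T1.4} itself (Sections~\ref{S3}--\ref{S6}), not a reduction to some prior case. The ``principal obstacle'' you name, the drift of the rotations and diagonalizers when there is no asymptotic periodicity, is exactly the content of Lemmas~\ref{L3.3}, \ref{L4.2}, \ref{L4.3} and \ref{L6.2}; it is handled by uniform estimates over all right limits together with Denisov's product lemma (Theorem~\ref{T6.1}) and the approximant/entropy argument of Section~\ref{S6}. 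You have identified the right difficulty but deferred it to a lemma that is not available.

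For the reverse inclusion, your block-wise subordinacy sketch is both harder and shakier than what the paper does. The paper simply invokes the Last--Simon theorem on a.c.\ spectra of right limits: for each right limit $\alpha^{(r)}$ one has $\Sigma_\ac(\mu)\subset\{|\Delta^{(r)}|\le 2\}$, and intersecting over all right limits gives $\Sigma_\ac(\mu)\subset\{L\le 2\}$ immediately. Your argument would need to show, for Lebesgue-a.e.\ $\theta$ in the complement, that the transfer matrix norms grow (pointwise subordinacy at a single $\theta$ says nothing about $\Sigma_\ac$), and turning widening hyperbolic windows into that statement is nontrivial; the right-limit route bypasses all of this.
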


\begin{remark}
It is possible to have $-A_+ \ge A_-$, and in that case, \eqref{1.8} is the empty set. Otherwise, it is a union of two arcs symmetric about $\mathbb{R}$.
\end{remark}

\begin{remark}
The preceding theorems sometimes yield arcs with purely singular spectrum. In the setting of Theorem~\ref{T1.1}, a result of Last--Simon \cite[Theorem 3.1]{LastSimon06} for essential spectra of right limits implies that the essential spectrum is
\[
\{e^{i\theta}\mid \theta \in [2 \arcsin \ul A, 2\pi - 2 \arcsin \ul A]\}
\]
where $\ul A = \liminf_{n\to\infty} \lvert \alpha_n \rvert$. If $\ul A < A$, this is strictly greater than the set \eqref{1.6}, so the complement supports a purely singular part of the measure.
\end{remark}

\begin{remark}\label{R1.4}
The condition $1 \notin I$ in Theorem~\ref{T1.1} and $-1,1\notin I$ in Theorem~\ref{T1.2} is not an artifact of the method, but a real phenomenon. This may be seen from the Szeg\H o theorem for the unit circle, 
\[
\alpha \in \ell^2 \quad\Leftrightarrow\quad \int_{\partial \mathbb{D}} \log w(\theta) \frac{d\theta}{2\pi} > -\infty.
\]
In the context of Theorem~\ref{T1.1}, this implies that if $\sum_{n=0}^\infty \lvert \alpha_{n+1} - \alpha_n \rvert^2 < \infty$ and $\lim_{n\to\infty} \alpha_n =  0$ but $\alpha \notin \ell^2$, \eqref{1.7} holds for closed arcs with $1\notin I$, so it must fail for all arcs with $1\in \Int I$, even though $\esssupp w = \partial\mathbb{D}$.

Another point of view is provided by a higher order Szeg\H o theorem due to Simon~\cite[Section 2.8]{Rice},
\[
\alpha \in \ell^4, \quad \sum_{n=0}^\infty \lvert \alpha_{n+1} - \alpha_n \rvert^2 < \infty \quad\Leftrightarrow\quad \int_{\partial \mathbb{D}} (1-\cos \theta) \log w(\theta) \frac{d\theta}{2\pi} > -\infty.
\]
Thus, for $\alpha \in \ell^4 \setminus \ell^2$ and $1\in \Int I$, \eqref{1.7} fails but a weighted condition holds.

Similarly, the necessity of singling out $-1,1$ in Theorem~\ref{T1.2} can be seen from Szeg\H o's theorem and a higher order Szeg\H o theorem of Simon--Zlato\v s~\cite{SimonZlatos05},
\[
\alpha \in \ell^4, \quad \sum_{n=0}^\infty \lvert \alpha_{n+2} - \alpha_n \rvert^2 < \infty \quad\Leftrightarrow\quad \int_{\partial \mathbb{D}} (1-\cos^2\theta) \log w(\theta) \frac{d\theta}{2\pi} > -\infty.
\]
For more on higher order Szeg\H o theorems, see \cite[Section 2.8]{Rice}, \cite{LaptevNabokoSafronov03}, \cite{SimonZlatos05}, \cite{GolinskiiZlatos07}, \cite{Lukic7}.
\end{remark}

As we will see later, the cases $p=1$ and $p=2$ are special because for those values of $p$, closed gaps of $p$-periodic sequences can only occur at $p$-th roots of unity. For larger values of $p$, to exactly describe $\esssupp w$, we will assume convergence to an isospectral torus. Last--Simon~\cite{LastSimon06} and Damanik--Killip--Simon~\cite{DamanikKillipSimon10} analyzed perturbations of periodic Jacobi and CMV matrices and their work shows that convergence to an isospectral torus, rather than asymptotic periodicity, is the natural generalization of decaying perturbations of the free case. We now review the necessary definitions. 

For $m\in \mathbb{N}_0$, define $S^m \alpha = \{\alpha_{n+m}\}_{n=0}^\infty$. A sequence $\alpha^{(r)} = \{\alpha^{(r)}_n\}_{n=-\infty}^\infty \in \mathbb{D}^\infty$ is called a right limit of $\alpha$ if there is a sequence $n_j \in\mathbb{Z}$, $n_j \to +\infty$, such that $S^{n_j}\alpha$ converges pointwise to $\alpha^{(r)}$, i.e.\ for all $n\in \mathbb{Z}$,
\[
\lim_{j\to \infty} \alpha_{n+n_j} = \alpha^{(r)}_{n}.
\]
When \eqref{1.5} holds, a compactness argument shows that $\alpha$ has at least one right limit; the same argument shows that for every sequence $n_j \to +\infty$ there exists a pointwise convergent subsequence.

The condition \eqref{1.2} implies
\begin{equation}\label{1.9}
\lim_{n\to \infty} \lvert \alpha_{n+p} - \alpha_n \rvert =  0,
\end{equation}
which implies that all right limits of $\alpha$ are $p$-periodic since
\[
\alpha^{(r)}_{n+p} - \alpha^{(r)}_n =\lim_{j\to\infty} ( \alpha_{n+p+n_j} - \alpha_{n+n_j}) = 0.
\]
If $\{\gamma_n\}_{n=0}^\infty \subset \mathbb{D}^\infty$ is $p$-periodic, its discriminant is defined as
\begin{equation}\label{1.10}
\Delta(z) = z^{-p/2} \tr \left( A(\gamma_{p-1},z) A(\gamma_{p-2},z) \dots A(\gamma_0,z)  \right).
\end{equation}
For odd values of $p$, this has an ambiguity in the choice of branch of $z^{-p/2}$; this does not affect the statements below. It is well known \cite[Chapter 11]{OPUC2}  that the CMV matrix corresponding to Verblunsky coefficients $\gamma$ has essential spectrum
\[
e = \{ z \in \partial \mathbb{D} \mid \Delta(z) \in [-2,2] \}
\]
and that this set is a union of $p$ arcs on $\partial\mathbb{D}$.

The isospectral torus of $e$, denoted $\mathcal{T}_e$, is the set of all $p$-periodic sequences whose essential spectrum is equal to $e$. It is known that this set is generically a $p$-dimensional torus and that all elements of the isospectral torus have the same discriminant, which we will denote by $\Delta_e(z)$.

To define convergence to an isospectral torus, we need a metric on $\mathbb{D}^\infty$,
\[
d(\alpha,\gamma) = \sum_{n=0}^\infty e^{-n} \lvert \alpha_n - \gamma_n \rvert,
\]
which has the property that convergence in $d$ is equivalent to pointwise convergence. Then $\alpha$ is said to converge to $\mathcal{T}_e$ if and only if
\[
\lim_{m\to\infty} d( S^m\alpha, \mathcal{T}_e) =0.
\]
This is equivalent to saying that every accumulation point of $\{S^m\alpha\}_{m=0}^\infty$ lies on $\mathcal{T}_e$. Since accumulation points of $S^m\alpha$ are precisely right limits,  $\alpha$ converges to the isospectral torus $\mathcal T_e$ if and only if all of its right limits lie on $\mathcal T_e$.

By \cite{LastSimon06}, convergence to the isospectral torus $\mathcal{T}_e$ implies  $\esssupp \mu = e$. With our square-summable variation condition \eqref{1.2}, we can say the same of $\esssupp w$:

\begin{thm}\label{T1.3}
Let \eqref{1.2} hold for some $p\in\mathbb{N}$ and assume \eqref{1.5}. If $\{\alpha_n\}_{n=0}^\infty$ converges to the isospectral torus $\mathcal T_e$, then
\begin{equation}\label{1.11}
\Sigma_\ac(\mu) = e.
\end{equation}
Moreover, \eqref{1.7} holds for any closed arc $I \subset e$ such that $\lvert\Delta_e(z)\rvert < 2$ for all $z\in I$.
\end{thm}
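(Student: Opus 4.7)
The plan is to establish $I \subseteq \Sigma_{\ac}(\mu)$ together with \eqref{1.7} for every closed arc $I \subset \{z\in\partial\mathbb{D}: |\Delta_e(z)|<2\}$; the reverse inclusion $\Sigma_\ac(\mu)\subseteq e$ is immediate from the Last--Simon right-limit theory, because every right limit of $\alpha$ lies on $\mathcal{T}_e$ and each element of $\mathcal{T}_e$ has pure absolutely continuous spectrum equal to $e$. Taking the union of such arcs $I$ exhausts $\{z:|\Delta_e(z)|<2\}$, and its essential closure is all of $e$ since the gap edges $\{|\Delta_e|=2\}$ form a finite set.

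For the harder direction, I would mimic the Kaluzhny--Shamis scheme, translated to OPUC and upgraded from asymptotic periodicity to convergence to the isospectral torus. Group the Verblunsky coefficients into blocks of length $p$ and form the one-period transfer matrices $T_m(z)=A(\alpha_{mp+p-1},z)\cdots A(\alpha_{mp},z)$. Since every accumulation point of the block $(\alpha_{mp},\dots,\alpha_{mp+p-1})$ arises from a right limit lying on $\mathcal{T}_e$, and all elements of $\mathcal{T}_e$ share the discriminant $\Delta_e$, the normalized traces $z^{-p/2}\tr T_m(z)$ converge to $\Delta_e(z)$ uniformly on $\partial\mathbb{D}$. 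Fix $I$ as above; then for $m$ large, $T_m(z)$ has a pair of unimodular, uniformly well-separated eigenvalues $e^{\pm i\Theta_m(z)}$ with $\Delta_e(z)=2\cos\Theta(z)$ and $\Theta_m\to\Theta$ uniformly on $I$. The square-summable variation condition \eqref{1.2} propagates to $\sum_m\|T_{m+1}(z)-T_m(z)\|^2<\infty$ uniformly in $z\in I$.

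The next step is a Pr\"ufer-type diagonalization. Using the smooth eigenvector structure of periodic CMV transfer matrices in the interior of the spectrum, I would select diagonalizing matrices $U_m(z)$ for $T_m(z)$ whose variation $\sum_m\|U_{m+1}(z)-U_m(z)\|^2$ is also summable uniformly on $I$. Conjugating the Szeg\H o recursion read one period at a time by $U_m$ produces a reduced recursion $\Psi_{m+1}=D_m(z)(I+F_m(z))\Psi_m$, with $D_m=\mathrm{diag}(e^{i\Theta_m},e^{-i\Theta_m})$ and $\sum_m\|F_m(z)\|^2<\infty$ uniformly on $I$. This is the EFGP setting of a rotation perturbed by $\ell^2$ coefficients; the Denisov--Kaluzhny--Shamis argument, adapted from Jacobi to CMV and exploiting nonresonance $\Theta_m(z)\in(0,\pi)$ on $I$ to average out the phases, yields an integrated bound $\int_I\log\|T_m(z)\cdots T_0(z)\|\,\frac{d\theta}{2\pi}=O(1)$ as $m\to\infty$.

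This control on transfer matrix growth translates into the Szeg\H o-type integral \eqref{1.7} by standard OPUC techniques, since the orthonormal polynomials grow no faster than $\|T_m\cdots T_0\|$ and the semicontinuity relation between $w$ and $|\varphi_n^*|^{-2}$ provides a lower bound for $\int_I\log w\,\frac{d\theta}{2\pi}$; combined with positivity of $w$ almost everywhere on $I$, this delivers $I\subseteq\Sigma_\ac(\mu)$. The main difficulty I expect is the coherent selection of the diagonalizing matrices $U_m$ when the orbit $\{S^{mp}\alpha\}$ wanders on $\mathcal{T}_e$ rather than converging to a single point: globally the eigenvector map is only defined up to a sign on each sheet of the spectral covering, so a patching argument with local holomorphic parametrizations of $\mathcal{T}_e$ near each accumulation point is needed, and one must verify that the patching errors remain within the $\ell^2$ budget provided by \eqref{1.2}.
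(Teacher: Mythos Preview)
Your outline is the same strategy the paper uses, but the paper packages it as the stronger Theorem~\ref{T1.4} (no convergence hypothesis at all), from which Theorem~\ref{T1.3} is a two-line corollary: convergence to $\mathcal T_e$ forces every right limit to have discriminant $\Delta_e$, hence $L(z)=|\Delta_e(z)|$, the set $\{L=2\}$ is finite, and \eqref{1.14b} collapses to \eqref{1.11}. Two implementation points deserve comment.

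The difficulty you anticipate---coherent selection of diagonalizing matrices as the orbit wanders on $\mathcal T_e$---is dissolved in the paper without any patching. The device is to conjugate each $\Phi_m(z)$ by the fixed matrix $M=\tfrac1{\sqrt2}\bigl(\begin{smallmatrix}1&1\\1&-1\end{smallmatrix}\bigr)$; after this the lower-left entry $c_m(z)$ is nonzero whenever $|\Delta_m(z)|<2$ (Theorem~\ref{T2.2}(iv)), so the explicit eigenvector columns in \eqref{4.3} are globally well defined, analytic in $z$ and in the Verblunsky coefficients, and inherit $\ell^2$ variation directly from \eqref{1.2} via Lemma~\ref{L3.1}. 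No sheet choices or local parametrizations of $\mathcal T_e$ are needed, and indeed this is what allows the paper to drop the convergence hypothesis entirely.

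Your endgame has a gap. The sentence ``the Denisov--Kaluzhny--Shamis argument\dots yields $\int_I\log\|T_m\cdots T_0\|\,\tfrac{d\theta}{2\pi}=O(1)$'' does not work as stated: Denisov's lemma (Theorem~\ref{T6.1}) requires $|\kappa_n|>\kappa>1$, which fails on $I$ where $|\lambda_m(z)|=1$. The paper instead works on a one-sided annular neighborhood $\Omega$ of $I$ inside the disk, applies Theorem~\ref{T6.1} there to obtain estimates that blow up like $\exp(C/(1-|z|))$ as $|z|\uparrow 1$, and then uses a harmonic-function lemma (Lemma~\ref{L6.4}) to convert this interior control into a uniform bound on the boundary integral. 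The passage to $w$ is also not via $|\varphi_n^*|^{-2}$ but via eventually-periodic approximants $\mu^N$: their densities $w^N$ are computed exactly from Weyl solutions (Lemma~\ref{L5.1}), one obtains $\int_I\log w^N\ge C$ uniformly in $N$, and upper semicontinuity of relative entropy under the weak convergence $\mu^N\to\mu$ delivers \eqref{1.7}.
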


All the above theorems will easily follow from our main result, an existence result for a.c.\ spectrum. This result does not require any convergence condition, so right limits will in general have different discriminants. We therefore define, as the supremum over all right limits of $\alpha$,
\begin{equation}\label{1.12}
L(z) = \sup_{(r)} \lvert \Delta^{(r)}(z) \rvert.
\end{equation}
In Lemma~\ref{L3.2} below, we will see that \eqref{1.5} implies that $L(z)$ is finite, that the $\sup$ is really a $\max$ and that $L(z)$ is continuous.

\begin{thm}\label{T1.4}
Let \eqref{1.2} hold for some $p\in\mathbb{N}$ and assume \eqref{1.5}. If $I \subset \partial\mathbb{D}$ is a closed arc such that
\begin{equation}\label{1.13}
\max_{z\in I} L(z) < 2,
\end{equation}
then \eqref{1.7} holds. Thus,
\begin{equation}\label{1.14b}
\{ z\in \partial \mathbb{D} \mid L(z) < 2 \} \subset \Sigma_\ac(\mu) \subset \{ z \in \partial\mathbb{D} \mid L(z) \le 2\}
\end{equation}
and
\begin{equation}\label{1.14}
\overline{\{ z\in \partial \mathbb{D} \mid L(z) < 2 \}} \subset \supp (w dx) \subset \overline{\{ z \in \partial\mathbb{D} \mid L(z) \le 2\}}^\ess.
\end{equation}
\end{thm}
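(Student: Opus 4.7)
The plan is to first establish the Szeg\H o-type bound \eqref{1.7} on any closed arc $I$ with $\max_{z\in I} L(z) < 2$, and then deduce the inclusions \eqref{1.14b} and \eqref{1.14} as easy corollaries. The upper inclusion $\Sigma_\ac(\mu) \subset \{L \le 2\}$ is essentially immediate from the Last--Simon theorem on a.c.\ spectra of right limits: $\Sigma_\ac(\mu)$ lies in $\sigma(\alpha^{(r)}) = \{z \mid \lvert \Delta^{(r)}(z)\rvert \le 2\}$ for every right limit $\alpha^{(r)}$, and intersecting over all right limits gives $\{L \le 2\}$. The lower inclusion $\{L < 2\} \subset \Sigma_\ac(\mu)$ follows from \eqref{1.7} combined with continuity of $L$ (Lemma~\ref{L3.2}): any $z_0$ with $L(z_0) < 2$ lies in a closed arc $I$ with $\max_I L < 2$, and \eqref{1.7} forces $w > 0$ a.e.\ on $I$. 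The inclusions in \eqref{1.14} then follow by taking (essential) closures.

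For \eqref{1.7} itself, the natural object is the period-$p$ transfer matrix
\[
T_n(z) = A(\alpha_{n+p-1}, z) \cdots A(\alpha_{n+1}, z) A(\alpha_n, z).
\]
The hypothesis \eqref{1.2}, together with smoothness of $A(\alpha, z)$ in $\alpha$ on compact subsets of $\mathbb{D}$ (using \eqref{1.5}), gives the summable-variation estimate $\sum_n \lVert T_{n+1}(z) - T_n(z) \rVert^2 < \infty$ locally uniformly in $z$. Meanwhile, $\max_I L < 2$ together with compactness of the family of right limits (Lemma~\ref{L3.2}) ensures that $\lvert \tr T_n(z) \rvert$ is uniformly bounded below $2$ for all sufficiently large $n$ and all $z \in I$, so that $T_n(z)$ is uniformly elliptic on $I$. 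Consequently one can choose basis-change matrices $U_n(z)$, depending continuously on $z \in I$ with $\lVert U_n\rVert$ and $\lVert U_n^{-1}\rVert$ uniformly bounded and with $\sum_n \lVert U_{n+1} - U_n\rVert^2 < \infty$, such that $U_n(z)^{-1} T_n(z) U_n(z)$ is conjugate to a rotation $R_n(z)$ modulo a small error.

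The heart of the proof, following the Denisov and Kaluzhny--Shamis strategy, is then to combine these two ingredients to show that the iterated product $T_{(N-1)p}(z) \cdots T_p(z) T_0(z)$ stays bounded in $L^2(I, d\theta)$ uniformly in $N$; an exponential/Pr\"ufer-variable argument converts the $\ell^2$ variation of the conjugating $U_n$ into boundedness of the Pr\"ufer amplitudes. Standard OPUC lower bounds on $w(\theta)$ in terms of transfer-matrix norms then yield \eqref{1.7}. The principal obstacle, and the novelty over Kaluzhny--Shamis, is the absence of the asymptotic periodicity assumption \eqref{1.3}: the ``background'' rotations $R_n(z)$ no longer converge but merely range in a compact family parametrized by the set of right limits, so the diagonalizers $U_n(z)$ must be chosen uniformly across this family. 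Arranging this choice compatibly with the Pr\"ufer argument, so that the $\ell^2$ variation of $\alpha$ translates cleanly into $\ell^2$ variation of $U_n$, is the central technical step.
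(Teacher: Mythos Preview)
Your overall plan for the inclusions \eqref{1.14b} and \eqref{1.14} is correct and matches the paper: the upper inclusion is the Last--Simon right-limit theorem, and the lower inclusion follows from \eqref{1.7} plus continuity of $L$. The essential content is therefore \eqref{1.7}, and here your sketch does not describe the Denisov--Kaluzhny--Shamis method.

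The claim that the products $T_{(N-1)p}(z)\cdots T_0(z)$ stay bounded in $L^2(I,d\theta)$ uniformly in $N$, via ``an exponential/Pr\"ufer-variable argument'' on $I$, is not what that method establishes and is not known to follow from $\ell^2$-variation of the diagonalizers. The whole difficulty with the $\ell^2$ (as opposed to $\ell^1$) condition is that rotations with $\ell^2$-varying conjugators do \emph{not} give bounded products on the circle. The actual argument proceeds off the circle: one works on a region $\Omega=\{re^{i\theta}:e^{i\theta}\in I,\ 1-\epsilon\le r\le 1\}$, where the eigenvalues satisfy $\lvert\lambda_n(z)\rvert\le 1-C(1-\lvert z\rvert)<1$. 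Denisov's matrix-product lemma (the paper's Theorem~\ref{T6.1}) then yields the estimate $\lvert\phi_N\rvert,\lvert\nu_N\rvert\le\exp\bigl(C/(1-\lvert z\rvert)\bigr)$, which blows up at the boundary; a harmonic-function estimate (Lemma~\ref{L6.4}) is then required to convert this controlled blowup into a bound $\int_I f_N^-\,d\theta<C$ on $I$ itself. Your sketch contains neither the off-circle extension nor the harmonic estimate.

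There is also a missing step at the end. The paper does not bound $w$ directly in terms of transfer-matrix norms; instead it introduces eventually-periodic approximants $\mu^N$ (Section~\ref{S5}), expresses $w^N$ via a Weyl solution, obtains $\int_I\log w^N\,d\theta>C$ uniformly in $N$ by the mechanism above, and then passes to $\mu$ using upper semicontinuity of relative entropy under weak convergence. The approximant/entropy-semicontinuity device is essential, and ``standard OPUC lower bounds on $w(\theta)$ in terms of transfer-matrix norms'' do not by themselves yield the Szeg\H o-type conclusion $\int_I\log w>-\infty$.
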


The proof of the lower estimate for $\Sigma_\ac(\mu)$ will take up most of the paper, whereas the upper estimate will be an immediate corollary of a Last--Simon result for a.c.\ spectra of right limits.

Most of the paper will be dedicated to proving Theorem~\ref{T1.4}. Section \ref{S2} reviews well known properties of $p$-step transfer matrices and modifies them in a way which will be needed later. Section~\ref{S3} establishes various uniform estimates, which are needed in place of convergence. These estimates are used in Section~\ref{S4} to uniformly diagonalize the transfer matrices. Section~\ref{S5} introduces weak approximants for $\mu$ and relates their absolutely continuous parts to certain Weyl solutions. Section~\ref{S6} completes the proof of Theorem~\ref{T1.4}, using the method of \cite{Denisov09,KaluzhnyShamis12} with necessary modifications. Section~\ref{S7} uses Theorem~\ref{T1.4} to prove Theorems~\ref{T1.1}, \ref{T1.2}, \ref{T1.3}.

\section{$p$-step transfer matrices}\label{S2}

Let $\gamma_0, \dots, \gamma_{p-1} \in\mathbb{D}$, and let us define a $p$-step transfer matrix and its discriminant by
\begin{align*}
\Phi(z) & = A(\gamma_{p-1},z) A(\gamma_{p-2},z) \dots A(\gamma_{0},z) \\
\Delta(z) & = z^{-p/2} \tr \Phi(z)
\end{align*}
The sign ambiguity that arises for odd $p$ can be dealt with in any of several standard ways, such as sieving~\cite[Example 1.6.14]{OPUC1} or treating $\Delta$ as a function of $z^{1/2}$ or as a two-valued function. Our analysis will work on a fixed arc $I$, on which we can fix a branch of $z^{p/2}$ throughout the proof.

\begin{thm}[{\cite[Sections 11.1--11.2]{OPUC2}}]  \phantomsection \label{T2.1}
\begin{enumerate}[(i)]
\item $\det \Phi(z) = z^p$;
\item $\Delta$ is analytic in $\mathbb{C}\setminus\{0\}$;
\item $z \in \partial \mathbb{D}$ implies  $\Delta(z) \in \mathbb{R}$ and $iz \Delta'(z) \in \mathbb{R}$;
\item $\Delta(z) \in [-2,2]$ implies $z\in \partial\mathbb{D}$;
\item $\Delta(z) \in (-2,2)$ implies $\Delta'(z)\neq 0$.
\end{enumerate}
\end{thm}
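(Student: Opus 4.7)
For (i), a direct computation gives $\det A(\alpha,z) = z$, and multiplicativity yields $\det\Phi(z) = z^p$. For (ii), each entry of $A(\gamma_j,z)$ is polynomial in $z$, so $\tr\Phi(z)$ is polynomial, and after choosing a branch of $z^{p/2}$, $\Delta(z)$ is analytic on $\mathbb{C}\setminus\{0\}$.

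For (iii), I plan to exploit an $SU(1,1)$-type structure. Defining the normalized one-step matrix $\mathcal{A}(\alpha,z) := z^{-1/2}A(\alpha,z)$, a direct computation shows that for $|z|=1$ it has the form $\begin{pmatrix} a & \bar b \\ b & \bar a \end{pmatrix}$ with $|a|^2 - |b|^2 = 1$, and this structure is preserved under matrix multiplication. Hence $\mathcal{P}(z) := z^{-p/2}\Phi(z)$ has the same form on $\partial\mathbb{D}$, so $\Delta(z) = \tr\mathcal{P}(z) = a + \bar a \in \mathbb{R}$. The claim $iz\Delta'(z)\in\mathbb{R}$ then follows by differentiating the real-valued map $\theta\mapsto\Delta(e^{i\theta})$.

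For (iv) and (v), I plan a root-counting argument. Since $\det\mathcal{P}(z) = 1$ by (i), the eigenvalues $\lambda_\pm$ of $\mathcal{P}(z)$ satisfy $\lambda_+\lambda_- = 1$ and $\lambda_+ + \lambda_- = \Delta(z)$. Fix $c \in (-2,2)$ with $c \neq 0$; then $\Delta(z) = \pm c$ is equivalent to the polynomial equation $(\tr\Phi(z))^2 - c^2 z^p = 0$, of degree $2p$ in $z$. The standard Floquet band structure for $p$-periodic CMV matrices guarantees that $\Delta(e^{i\theta}) = c$ and $\Delta(e^{i\theta}) = -c$ each have $p$ distinct solutions in $[0,2\pi)$, producing $2p$ roots on $\partial\mathbb{D}$ and accounting for all roots of the degree-$2p$ polynomial. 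Hence $\Delta(z) = c$ forces $z \in \partial\mathbb{D}$; the cases $c = 0$ and $c = \pm 2$ follow by continuity, proving (iv). For (v), the vanishing $\Delta'(z_0) = 0$ at some $z_0 \in \partial\mathbb{D}$ with $c := \Delta(z_0) \in (-2,2)$ would make $z_0$ a multiple root of $(\tr\Phi)^2 - c^2 z^p$ (a short calculation using $\tr\Phi = z^{p/2}\Delta$ confirms this), reducing the count of distinct roots on $\partial\mathbb{D}$ and forcing a root off the circle, contradicting (iv).

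The main obstacle is the Floquet band-structure count used in (iv), namely that $\Delta(e^{i\theta}) = c$ has exactly $p$ distinct solutions for each $c \in (-2,2)$. This standard fact I would import from \cite[Chapter 11]{OPUC2} rather than reprove from scratch.
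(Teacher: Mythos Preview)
The paper does not prove this theorem; it is stated with a citation to \cite[Sections 11.1--11.2]{OPUC2}, together with the remark that these are standard facts about the $p$-step transfer matrix. Your arguments for (i)--(iii) are correct and self-contained, and in that sense go beyond what the paper provides.

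For (iv) and (v), however, your argument is circular. The fact you wish to import---that $\Delta(e^{i\theta}) = c$ has exactly $p$ distinct solutions in $[0,2\pi)$ for each $c\in(-2,2)$---is, in the standard development (including Simon's in \cite{OPUC2}), a \emph{consequence} of (iv) and (v): one first shows that all solutions of $\Delta(z) = c$ lie on $\partial\mathbb{D}$, then that they are simple, and the degree of $\tr\Phi$ then forces exactly $p$ of them. Importing this downstream statement to establish (iv) and (v) reverses the logical order. Since the paper itself defers the whole theorem to \cite{OPUC2}, the cleanest fix is to do the same for (iv) and (v). If you want a genuinely independent argument, one route is to prove directly the interlacing on $\partial\mathbb{D}$ of the zeros of $\tr\Phi(z) - 2z^{p/2}$ and $\tr\Phi(z) + 2z^{p/2}$ (the OPUC analog of the determinantal interlacing used for periodic Jacobi matrices); from that your root count, and hence (iv) and (v), would follow without circularity.
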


These statements are usually made in the context of $p$-periodic Verblunsky coefficients, where $\Delta(z)$ is precisely the discriminant of the corresponding measure (compare with \eqref{1.10}). However, they can be viewed as merely facts about the $p$-step transfer matrix $\Phi(z)$.

Rather than working directly with $\Phi(z)$, we will alter it slightly. Let
\[
M = \frac 1{\sqrt 2} \begin{pmatrix} 1 & 1 \\ 1 & -1 \end{pmatrix}.
\]
Then $M=M^{-1} = M^*$. We introduce $\tilde \Phi(z)$ and its entries $a(z), b(z), c(z), d(z)$ by
\begin{equation}\label{2.1}
\tilde\Phi(z) = z^{-p/2} M \Phi(z) M  = \begin{pmatrix} a(z) & b(z) \\  c(z) & d(z) \end{pmatrix}.
\end{equation}
This has several useful properties, listed in the following theorem.

\begin{thm}  \phantomsection \label{T2.2} 
\begin{enumerate}[(i)]
\item $\det \tilde\Phi(z) = 1$;
\item $ \tr \tilde\Phi(z) = \Delta(z)$;
\item if $\lvert z \rvert = 1$, then $a(z), i b(z), i c(z), d(z) \in \mathbb{R}$;
\item if $\Delta(z) \in (-2,2)$, then $c(z) \neq 0$.
\end{enumerate}
\end{thm}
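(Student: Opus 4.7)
Parts (i) and (ii) follow from standard $2\times 2$ matrix manipulations. Since $\det M = -1$ and $\det \Phi(z) = z^p$ by Theorem~\ref{T2.1}(i), one has $\det \tilde\Phi(z) = (z^{-p/2})^2 (\det M)^2 \det \Phi(z) = z^{-p} \cdot 1 \cdot z^p = 1$; since $M^2 = I$, cyclicity of trace gives $\tr \tilde\Phi(z) = z^{-p/2} \tr(\Phi(z) M^2) = z^{-p/2} \tr \Phi(z) = \Delta(z)$.

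The substantive content is (iii), and the plan is to derive a complex-conjugation symmetry of $\tilde\Phi(z)$ on $\partial\mathbb{D}$ and read off the reality properties entrywise. Using $\bar z = z^{-1}$ for $|z|=1$ and writing $\sigma_1 = \begin{pmatrix} 0 & 1 \\ 1 & 0 \end{pmatrix}$, a direct computation from the definition of $A(\alpha,z)$ verifies the one-step identity $\overline{A(\alpha,z)} = z^{-1} \sigma_1 A(\alpha,z) \sigma_1$ on the unit circle. Taking the product of $p$ such factors and using $\sigma_1^2 = I$ gives $\overline{\Phi(z)} = z^{-p} \sigma_1 \Phi(z) \sigma_1$. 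Working with the branch of $z^{p/2}$ fixed on $I$, so that $\overline{z^{-p/2}} = z^{p/2}$ for $|z|=1$, and using the short computation $M \sigma_1 M = \sigma_3$ with $\sigma_3 = \begin{pmatrix} 1 & 0 \\ 0 & -1 \end{pmatrix}$, conjugation by $M$ converts the symmetry into $\overline{\tilde\Phi(z)} = \sigma_3 \tilde\Phi(z) \sigma_3$. Reading this entrywise gives $\bar a = a$, $\bar b = -b$, $\bar c = -c$, $\bar d = d$, which is (iii).

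Part (iv) then follows quickly: by Theorem~\ref{T2.1}(iv), $\Delta(z) \in (-2,2)$ forces $z \in \partial\mathbb{D}$, so (iii) applies and in particular $a(z), d(z)$ are real. If $c(z) = 0$, then $\tilde\Phi(z)$ is upper triangular with eigenvalues $a(z)$ and $d(z)$; but by (i) and (ii) those eigenvalues satisfy $t^2 - \Delta(z) t + 1 = 0$, whose discriminant $\Delta(z)^2 - 4$ is strictly negative, making the roots a non-real complex-conjugate pair, a contradiction. The only genuinely nontrivial step in the whole plan is the one-step conjugation identity and its transport through $M$; the determinant and trace identities, and the eigenvalue argument for (iv), are each one-line $2\times 2$ calculations.
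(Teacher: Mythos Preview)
Your proof is correct, and for parts (iii) and (iv) it takes a genuinely different route from the paper's. For (iii), the paper writes $\Phi$ explicitly in terms of the first and second kind orthonormal polynomials $\varphi_p,\psi_p$ and their $*$-reversals, then computes $\tilde\Phi$ entrywise and invokes the relation $\varphi_p^*(z)=z^p\,\overline{\varphi_p(z)}$ on $\partial\mathbb{D}$ to see that, e.g., $c(z)=i\,\Im(z^{-p/2}\varphi_p(z))$. Your approach instead establishes the structural symmetry $\overline{\tilde\Phi(z)}=\sigma_3\tilde\Phi(z)\sigma_3$ directly from the one-step identity $\overline{A(\alpha,z)}=z^{-1}\sigma_1 A(\alpha,z)\sigma_1$ and the relation $M\sigma_1 M=\sigma_3$; this is more intrinsic, avoids introducing $\psi_p$, and makes the reality pattern transparent at once. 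For (iv), the paper simply cites \cite[Theorem 11.3.1]{OPUC2}, whereas you give a self-contained two-line argument: if $c=0$ then the (real, by (iii)) diagonal entries $a,d$ would be the eigenvalues, contradicting that the characteristic polynomial $t^2-\Delta t+1$ has non-real roots when $\Delta\in(-2,2)$. The paper's approach for (iii) has the minor advantage of yielding explicit formulas for the entries in terms of $\varphi_p,\psi_p$, but those formulas are not used later, so your argument is at no disadvantage for the purposes of this paper.
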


\begin{proof}
(i) and (ii) follow from Theorem~\ref{T2.1}(i) and cyclicity of trace.

To prove (iii), denote by $\varphi_p(z)$ and $\psi_p(z)$ the orthogonal and second kind orthogonal polynomials. It is known that
\[
\Phi = \tfrac 12 \begin{pmatrix} \varphi_p + \psi_p & \varphi_p - \psi_p \\   \varphi_p^* - \psi^*_p & \varphi_p^*  + \psi^*_p \end{pmatrix}
\]
so \eqref{2.1} implies
\[
\tilde\Phi = \tfrac 12 z^{-p/2}  \begin{pmatrix} \varphi_p + \varphi_p^* & \psi_p - \psi^*_p \\   \varphi_p - \varphi^*_p & \psi_p + \psi^*_p \end{pmatrix}.
\]
If $\lvert z \rvert =1$, \eqref{1.1} implies $\varphi_p^*(z) = z^p \overline{\varphi_p(z)}$. Thus,
\[
c(z) = \tfrac 12 z^{-p/2} (\varphi_p(z) - z^p \overline{\varphi_p(z)}) = i \Im ( z^{-p/2} \varphi_p(z) ).
\]
Claims for $a(z), b(z), d(z)$ are proved analogously.

(iv) is just \cite[Theorem 11.3.1]{OPUC2} in disguise.
\end{proof}

This simple trick of conjugating by $M$ does not seem to be present in the literature; however, it has the useful properties (iii) and (iv) above. While (iii) will be convenient in several places, (iv) will be crucial to our diagonalization procedure in Section~\ref{S4}.

\section{Estimates on transfer matrices}\label{S3}

We define the $p$-step transfer matrix between $mp$ and $(m+1)p$ and its rescaled trace,
\begin{align*}
\Phi_m(z) & = A(\alpha_{(m+1)p-1},z) A(\alpha_{(m+1)p-2},z) \dots A(\alpha_{mp},z) \\
\Delta_m(z) & = z^{-p/2} \tr \Phi_m(z)
\end{align*}
Following \eqref{2.1}, we also introduce $\tilde \Phi_m(z)$ and $a_m(z), b_m(z), c_m(z), d_m(z)$ by
\[
\tilde\Phi_m(z) = z^{-p/2} M \Phi_m(z) M  = \begin{pmatrix} a_m(z) & b_m(z) \\  c_m(z) & d_m(z) \end{pmatrix}.
\]

In this section, we make some preliminary observations about the $\tilde\Phi_m(z)$ and relate them to $L(z)$. They are mostly uniformness statements, necessary because we don't assume that $\tilde\Phi_m(z)$ converge and cannot apply local arguments around the limit.

We begin with a preliminary observation: although the notation $\Phi_m(z)$ is convenient, we will also find it useful to think about $\Phi_m(z)$ as a function of $\alpha_{mp},\alpha_{mp+1},\dots,\alpha_{(m+1)p-1}$ and $z$, with no $m$-dependence except through the $\alpha$'s. The same holds for $\tilde\Phi_m(z)$, its entries, and some functions to be introduced later.

\begin{lemma}\label{L3.1}
$\Phi_m(z)$ is an analytic function of real and imaginary parts of $\alpha_{mp}, \alpha_{mp+1}, \dots, \alpha_{(m+1)p-1} \in \mathbb{D}$ and an analytic function of $z\in \mathbb{C} \setminus \{0\}$. The same is true of $\tilde\Phi_m(z)$, $a_m(z)$, $b_m(z)$, $c_m(z)$, $d_m(z)$ and $\Delta_m(z)$.

For any such function $f_m(z)$, if \eqref{1.5} holds, then for any $R<\infty$, there is a constant $C< \infty$ such that  for all $m\ge 0$ and $z \in \overline{\mathbb{D}}_R \setminus \mathbb{D}_{1/R}$,
\begin{align}
\lvert f_m(z) \rvert  & \le C, \label{3.1}  \\
\lvert f_{m+1}(z) - f_m(z) \rvert  & \le C \sum_{k=0}^{p-1} \lvert \alpha_{(m+1)p+k} - \alpha_{mp+k} \rvert.\label{3.2}
\end{align}
In particular, if \eqref{1.2} also holds, then
\begin{equation}\label{3.3}
\sum_{m=0}^\infty \lvert f_{m+1}(z) - f_m(z) \rvert^2 < \infty.
\end{equation}
\end{lemma}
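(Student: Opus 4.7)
The plan is to reduce all three claims to estimates on the matrix product $\Phi_m(z)=A(\alpha_{(m+1)p-1},z)\cdots A(\alpha_{mp},z)$ itself; each of $\tilde\Phi_m$, its entries $a_m,b_m,c_m,d_m$, and $\Delta_m$ is obtained from $\Phi_m$ by linear operations whose coefficients depend only on $z$ (conjugation by $M$ and multiplication by $z^{-p/2}$), so bounds transfer immediately. Analyticity is then straightforward: each entry of $A(\alpha,z)$ is a polynomial in $z,\alpha,\bar\alpha$ divided by $\sqrt{1-|\alpha|^2}$, which is real-analytic in $(\Re\alpha,\Im\alpha)$ on $\mathbb{D}$, so $\Phi_m$ is jointly analytic in the $\alpha$-variables and in $z\in\mathbb{C}\setminus\{0\}$. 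The factor $z^{-p/2}$ poses no issue once a branch of $z^{1/2}$ is chosen on a simply connected neighborhood of $\overline{\mathbb{D}}_R\setminus\mathbb{D}_{1/R}$, which is all the lemma needs.

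For \eqref{3.1}, I will first upgrade \eqref{1.5} to a uniform bound $\sup_n|\alpha_n|\le c<1$: the hypothesis on the tail together with $\alpha_n\in\mathbb{D}$ guarantees this, because only finitely many indices can exceed any $c$ strictly above $\limsup_n|\alpha_n|$, and those finitely many values are themselves strictly below $1$. With this in hand, each factor $A(\alpha_n,z)$ has operator norm bounded uniformly in $n$ and $z\in\overline{\mathbb{D}}_R\setminus\mathbb{D}_{1/R}$, so the $p$-fold product $\Phi_m$ inherits such a bound.

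The key estimate \eqref{3.2} comes from a telescoping expansion. With the shorthand $A_k^{(m)}=A(\alpha_{mp+k},z)$, I write
\[
\Phi_{m+1}-\Phi_m=\sum_{k=0}^{p-1}A_{p-1}^{(m+1)}\cdots A_{k+1}^{(m+1)}\bigl(A_k^{(m+1)}-A_k^{(m)}\bigr)A_{k-1}^{(m)}\cdots A_0^{(m)},
\]
with empty products interpreted as the identity. Since $A(\alpha,z)$ is $C^1$ in $(\Re\alpha,\Im\alpha)$ on $\{|\alpha|\le c\}$ with derivatives bounded uniformly for $z$ in the annulus, each factor-difference is $O(|\alpha_{(m+1)p+k}-\alpha_{mp+k}|)$, and combining with the uniform bounds on the remaining factors yields \eqref{3.2}. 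Finally, \eqref{3.3} follows by Cauchy--Schwarz: the square of the right-hand side of \eqref{3.2} is at most $pC^2\sum_{k=0}^{p-1}|\alpha_{(m+1)p+k}-\alpha_{mp+k}|^2$, and summing over $m$ recovers $p$ times the series in \eqref{1.2}.

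There is no genuine obstacle here; the lemma is a bookkeeping statement that the $p$-step transfer matrices inherit analyticity, uniform boundedness, and square-summable variation from the Verblunsky coefficients. The only mildly subtle point, handled in my treatment of \eqref{3.1}, is converting the hypothesis \eqref{1.5} into a uniform bound on the entire sequence rather than just its tail.
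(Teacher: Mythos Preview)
Your proof is correct and follows essentially the same approach as the paper: analyticity from the structure of $A(\alpha,z)$, uniform bounds from compactness of the parameter set, and the Lipschitz estimate \eqref{3.2} from bounded derivatives (the paper invokes the mean value theorem directly on $f_m$ viewed as a function of the $\alpha$-parameters, while your telescoping at the factor level is an equivalent concrete realization). Your explicit Cauchy--Schwarz step for \eqref{3.3} and the upgrade of \eqref{1.5} to a uniform bound on the whole sequence are details the paper leaves implicit.
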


\begin{proof}
The entries of $A(\alpha_{mp+k},z)$ have the listed analyticity properties. Thus, so do entries of their products $\Phi_m(z)$, and by their definitions, so do the other functions listed.

By \eqref{1.5}, we are working with parameters $z\in  \overline{\mathbb{D}}_R \setminus \mathbb{D}_{1/R}$ and $\alpha_{mp+k} \in \overline{\mathbb{D}}_r$, with $r = \sup_n \lvert \alpha_n \rvert < 1$. Compactness of this set of parameters, together with analyticity of  $f_m(z)$, implies \eqref{3.1} and implies that the partial derivatives of $f$ in $\Re \alpha_{mp+k}$ and $\Im\alpha_{mp+k}$ are bounded. Boundedness of these partial derivatives implies \eqref{3.2} by the mean value theorem. \eqref{3.3} follows immediately from \eqref{3.2} and \eqref{1.2}.
\end{proof}

The following lemma relates $L(z)$ to the $\Delta_m(z)$ and establishes its properties.

\begin{lemma}\label{L3.2}
Assume that \eqref{1.5} and  \eqref{1.9} hold.  Then for all $z\in\mathbb{C}\setminus\{0\}$, $L(z)$ is a finite number,
\begin{equation}\label{3.4}
L(z) = \limsup_{m\to\infty} \lvert \Delta_m(z) \rvert,
\end{equation}
and the $\sup$ in \eqref{1.12} is actually a $\max$ (i.e.\ for every $z\neq 0$ there is a right limit for which $\lvert \Delta^{(r)}(z) \rvert = L(z)$). Moreover, for any $R<\infty$, $L(z)$ is Lipschitz continuous on ${\{z \in \mathbb{C} \mid R^{-1}  \le \lvert z\rvert \le R \} }$.
 \end{lemma}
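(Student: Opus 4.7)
The plan is to use Lemma~\ref{L3.1} as the backbone. Finiteness of $L(z)$ and uniform boundedness of $\{|\Delta_m(z)|\}$ on any annulus $\{R^{-1} \le |z| \le R\}$ follow directly from \eqref{3.1} applied to $f_m = \Delta_m$; continuity of the discriminant in its Verblunsky coefficients (inherited from Lemma~\ref{L3.1}) then transfers the bound to every right limit. The heart of the argument is establishing \eqref{3.4} together with the attainment of the supremum.

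To prove $L(z) \le \limsup_m |\Delta_m(z)|$, I would take a right limit $\alpha^{(r)}$ arising from some $n_j \to +\infty$. After passing to a subsequence I may assume $n_j \equiv r \pmod p$ for a fixed $r \in \{0,\dots,p-1\}$, and set $m_j = (n_j - r)/p$. Using compactness from \eqref{1.5} and a diagonal argument, extract a further subsequence along which $S^{m_j p}\alpha$ converges pointwise to some $p$-periodic sequence $\beta$ (periodicity coming from \eqref{1.9} as already remarked in the text). Since $\beta_n = \lim_j \alpha_{n + m_j p} = \alpha^{(r)}_{n-r}$, the sequence $\beta$ is a cyclic shift of $\alpha^{(r)}$, and the standard conjugation identity for periodic monodromies (the $p$-step product starting at index $k$ equals $U_k T_0 U_k^{-1}$, with $U_k$ the partial product through index $k-1$) gives $\Delta^\beta(z) = \Delta^{(r)}(z)$. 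Continuity in the coefficients then yields $|\Delta^{(r)}(z)| = |\Delta^\beta(z)| = \lim_j |\Delta_{m_j}(z)| \le \limsup_m |\Delta_m(z)|$.

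For the reverse inequality and simultaneously the max claim, I would choose $m_j$ with $|\Delta_{m_j}(z)| \to \limsup_m |\Delta_m(z)|$ and extract a subsequence along which $S^{m_j p}\alpha \to \beta$ pointwise. Then $\beta$ is a right limit with $|\Delta^\beta(z)| = \limsup_m |\Delta_m(z)|$, so the bound in \eqref{1.12} is attained at $\beta$ and coincides with the limsup. For uniform Lipschitz continuity on $\{R^{-1} \le |z| \le R\}$, I would apply \eqref{3.1} on a slightly enlarged annulus, then invoke Cauchy's integral formula to bound $|\Delta_m'(z)|$ uniformly in $m$ by a constant $K$ on the original annulus. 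Since $\bigl| |\Delta_m(z)| - |\Delta_m(w)| \bigr| \le K|z-w|$ holds for every $m$, taking $\limsup$ preserves the estimate and gives $|L(z) - L(w)| \le K|z-w|$.

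The main obstacle I anticipate is the bookkeeping in the first step: one must pass from a right limit taken along arbitrary indices $n_j$ to one aligned with the blocks $[mp,(m+1)p)$ that define $\Delta_m$, and then identify the two discriminants using cyclic invariance of the trace under conjugation. Once that identification is in hand, everything else is routine analysis of uniformly bounded families of analytic functions.
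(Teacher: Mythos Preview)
Your proposal is correct and follows essentially the same approach as the paper's proof: both directions of \eqref{3.4} are obtained by passing to subsequences aligned with the block structure $[mp,(m+1)p)$ and invoking cyclic invariance of the trace, and Lipschitz continuity is deduced from uniform Lipschitz bounds on the $|\Delta_m|$. The only cosmetic differences are that the paper handles the first inequality by directly rewriting $\Delta^{(r)}$ via cyclicity (rather than introducing your auxiliary shift $\beta$), and it obtains the uniform Lipschitz constant from the compactness argument of Lemma~\ref{L3.1} rather than from Cauchy's estimate on an enlarged annulus.
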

 
 \begin{proof}
Let us define
\[
{\mathcal L}(z) =  \limsup_{m\to\infty} \lvert \Delta_m(z) \rvert.
\]
This quantity is finite for $z\in\mathbb{C}\setminus \{0\}$ by \eqref{3.1} applied to $\Delta_m(z)$.

Let $\Delta^{(r)}$ be the discriminant for the right limit $\alpha^{(r)}$ corresponding to indices $\{n_j\}_{j=1}^\infty$. By passing to a subsequence, make the sequence constant modulo $p$, i.e.\ $n_j = m_j p + q$; this is possible for some choice of $q \in \{0,1,\dots, p-1\}$. 
Cyclicity of trace, together with \eqref{1.10} and $p$-periodicity of $\alpha^{(r)}$, gives
\[
\lvert \Delta^{(r)}(z) \rvert  = \left\lvert z^{-p/2} \tr \left(  A(\alpha^{(r)}_{2p-q-1},z) A(\alpha^{(r)}_{2p-q-2},z) \dots A(\alpha^{(r)}_{p-q},z)   \right) \right\rvert.
\]
Thus, by $\alpha^{(r)}_n = \lim_{j\to\infty} \alpha_{n_j+n}$ and continuity of $A(\alpha,z)$ in $\alpha\in \mathbb{D}$,
\[
\lvert \Delta^{(r)}(z) \rvert  = \lim_{j\to \infty} \lvert \Delta_{m_j+1}(z) \rvert  \le {\mathcal L}(z).
\]
Since this holds for every right limit, we conclude $L(z)  \le {\mathcal L}(z)$.

For the opposite inequality, fix $z$ and let $m_k$ be a sequence of integers with $\lim_{k\to\infty} \lvert \Delta_{m_k}(z) \rvert = {\mathcal L}(z)$. By \eqref{1.5} and compactness,  a subsequence of  $\{m_k p\}_{k=1}^\infty$ gives rise to a right limit $\alpha^{(r)}$; for this right limit, $\lvert \Delta^{(r)}(z)  \rvert  = {\mathcal L}(z)$. This shows that $L(z) = {\mathcal L}(z) < \infty$ and that the $\sup$ in \eqref{1.12} is a $\max$.

Denote $r = \sup_n \lvert \alpha_n \rvert <1$. By Lemma~\ref{L3.1}, $\Delta_m(z)$ is an analytic function of real and imaginary parts of $\alpha_{mp}, \dots, \alpha_{(m+1)p-1} \in \overline{\mathbb D}_r$ and of $z \in \overline{\mathbb D}_R \setminus \mathbb{D}_{1/R}$. Since this set of parameters is compact, we conclude that the $\Delta_m(z)$ are uniformly Lipschitz continuous in $z \in \overline{\mathbb D}_R \setminus \mathbb{D}_{1/R}$. As the $\limsup$ of uniformly Lipschitz continuous functions, $L(z)$ is also Lipschitz continuous.
\end{proof}

\begin{remark}
If $\limsup_{n\to\infty} \lvert \alpha_n\rvert = 1$, one may be inclined to define $L(z)$ by \eqref{3.4}. However, some of the above properties would no longer be true. For instance, for $p=1$, $\lvert \Delta_m(z) \rvert = \frac {\lvert z+1 \rvert}{\sqrt{1-\lvert \alpha_m\rvert^2}} $, so $\limsup_{n\to\infty} \lvert \alpha_n \rvert =1$ would imply
\[
L(z) = \begin{cases} 0 & z=-1 \\ +\infty & z\neq -1 \end{cases}
\]
which is no longer finite or continuous.
\end{remark}

\begin{lemma}\label{L3.3}
Assume \eqref{1.5} and  \eqref{1.9}  and let $I\subset\partial\mathbb{D}$ be a closed arc such that \eqref{1.13} holds. Then there exist $m_0\in \mathbb{N}_0$, $s, t \in \{-1,+1\}$, $\epsilon\in (0,1)$ and $C>0$ such that for all $m\ge m_0$ and $z\in \Omega$,
\begin{align}
\lvert \Delta_m(z) \rvert & \le 2 - C \label{3.5} \\
s \Im \left( z \Delta'_m(z) \right) &  \ge C \label{3.6} \\ 
C \le t \Im \left( c_m(z) \right) & \le \lvert c_m(z) \rvert   \le C^{-1} \label{3.7}
\end{align}
where
\begin{equation}\label{3.8}
\Omega = \{ r e^{i\theta}\mid e^{i\theta} \in I, r \in [1-\epsilon,1]\}.
\end{equation}
\end{lemma}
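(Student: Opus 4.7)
The plan is to combine the uniform Lipschitz estimates of Lemma~\ref{L3.1} and the continuity of $L$ (Lemma~\ref{L3.2}) with the structural facts in Theorems~\ref{T2.1}(iii)--(v) and \ref{T2.2}(iii)--(iv), using compactness/contradiction on the arc $I$ to promote pointwise statements to uniform ones, then using \eqref{1.9} to freeze the signs $s$ and $t$, and finally extending the estimates from $I$ to $\Omega$ by uniform continuity in $z$.

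\textbf{Bound \eqref{3.5}.} Since $L$ is continuous and $I$ is compact with $L<2$ on $I$, choose $C>0$ so that $L\le 2-3C$ on $I$. If \eqref{3.5} were to fail on $I$ for arbitrarily large $m$, we could extract $m_k\to\infty$ and $z_k\to z^*\in I$ with $\lvert\Delta_{m_k}(z_k)\rvert>2-C$; uniform Lipschitz continuity of $\{\Delta_m\}$ (Lemma~\ref{L3.1}) would then give $\limsup_k\lvert\Delta_{m_k}(z^*)\rvert\ge 2-C$, hence $L(z^*)\ge 2-C>2-3C$, a contradiction.

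\textbf{Magnitudes of $\Delta_m'$ and $c_m$.} The same scheme works on $I$: if $\lvert\Delta_{m_k}'(z_k)\rvert\to 0$ for $m_k\to\infty$, $z_k\in I$, pass to a subsequence so that $z_k\to z^*\in I$ and, using \eqref{1.5}, the $p$-tuple $(\alpha_{m_kp},\dots,\alpha_{(m_k+1)p-1})\in\overline{\mathbb D}_r^{\,p}$ converges to some $\gamma^*$. By Lemma~\ref{L3.1}, both $\Delta_{m_k}$ and $\Delta_{m_k}'$ converge to the discriminant $\Delta^*$ of the $p$-periodic extension of $\gamma^*$ and to $(\Delta^*)'$. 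From \eqref{3.5} applied along the subsequence, $\lvert\Delta^*(z^*)\rvert\le 2-C<2$; Theorem~\ref{T2.1}(iii) places $\Delta^*(z^*)\in(-2,2)$, whence Theorem~\ref{T2.1}(v) yields $(\Delta^*)'(z^*)\ne 0$, contradicting $\Delta_{m_k}'(z_k)\to 0$. The analogous argument with Theorem~\ref{T2.2}(iv) gives $\lvert c_m(z)\rvert\ge c>0$ on $I$ for $m\ge m_0$; the upper bound $\lvert c_m\rvert\le C^{-1}$ is just Lemma~\ref{L3.1}.

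\textbf{Sign stabilization and extension to $\Omega$.} By Theorem~\ref{T2.1}(iii), $iz\Delta_m'(z)\in\mathbb R$ on $\partial\mathbb D$, so $z\Delta_m'(z)$ is purely imaginary and $\lvert\Im(z\Delta_m'(z))\rvert=\lvert\Delta_m'(z)\rvert$ on $I$; similarly Theorem~\ref{T2.2}(iii) gives $\lvert\Im(c_m(z))\rvert=\lvert c_m(z)\rvert$ on $I$. Writing $g_m(z):=\Im(z\Delta_m'(z))$, $h_m(z):=\Im(c_m(z))$, the previous step gives $\lvert g_m\rvert,\lvert h_m\rvert\ge c$ on $I$, so each is continuous, real-valued and nonvanishing on the connected set $I$; hence each has a constant sign $s_m,t_m\in\{\pm 1\}$. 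By Lemma~\ref{L3.1} and \eqref{1.9}, $\sup_{z\in I}\lvert g_{m+1}-g_m\rvert+\sup_{z\in I}\lvert h_{m+1}-h_m\rvert\to 0$, so eventually these suprema fall below $c$, forbidding any flip and yielding $s_m\equiv s$, $t_m\equiv t$ for $m\ge m_0$. Finally, uniform Lipschitz continuity in $z$ of $\Delta_m,\Delta_m',c_m,g_m,h_m$ (Lemma~\ref{L3.1}) lets us shrink $\epsilon$ so that \eqref{3.5}--\eqref{3.7} all survive on $\Omega$. The main obstacle, as I see it, is this sign stabilization: a priori distinct right limits might produce discriminants of opposite ``orientation,'' so neither the compactness argument nor Theorems~\ref{T2.1}--\ref{T2.2} alone suffice — it is precisely the hypothesis \eqref{1.9} (i.e.\ the closeness of consecutive $\tilde\Phi_m$), combined with the uniform magnitude gap, that forbids infinitely many sign flips.
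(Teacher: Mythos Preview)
Your proof is correct and follows essentially the same approach as the paper's own proof: a compactness/contradiction argument on $I$ for the magnitude bounds (using Lemma~\ref{L3.2} for $\Delta_m$ and Theorems~\ref{T2.1}(v), \ref{T2.2}(iv) for $\Delta_m'$ and $c_m$), reality on $\partial\mathbb D$ plus \eqref{1.9} via \eqref{3.2} to freeze the signs, and uniform Lipschitz continuity to thicken $I$ to $\Omega$. The only cosmetic difference is that you extract a limiting $p$-tuple $\gamma^*$ directly rather than phrasing it as a right limit of $\alpha$, which is equivalent since $\Delta_m$ and $\Delta_m'$ depend only on that $p$-tuple.
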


\begin{proof}
The upper bound for $\lvert c_m(z)\rvert$ follows from Lemma~\ref{L3.1}. For the other estimates, it suffices to find $m_0, s, t, C$ such that they are true for $\Omega = I$; by uniform Lipschitz continuity of $\Delta_m$, $\Delta'_m$, $c_m$, the estimates will then, with a change of $C$, also hold on the set $\Omega$ given by \eqref{3.8} for a small enough $\epsilon>0$. Therefore, in the remainder of this proof, we work with $\Omega = I$.

To prove \eqref{3.5}, assume, on the contrary, that there are sequences $m_k \to \infty$, $z_k \in I$ with $\lvert \Delta_{m_k}(z_k) \rvert \ge 2$. By compactness of $I$ we may pass to a subsequence such that $z_k \to z_\infty \in I$; since the $\Delta_m$ are uniformly Lipschitz continuous, this implies $L(z_\infty) \ge 2$, which is a contradiction with \eqref{3.4}.

To prove \eqref{3.6}, let us first prove that
\begin{equation}\label{3.9}
\inf_{m\ge m_0} \min_{z\in I}  \lvert \Delta'_m(z) \vert > 0
\end{equation}
for large enough $m_0$. If this was false, there would exist sequences $m_k \to \infty$, $z_k \in I$ with $ \Delta'_{m_k}(z_k)  \to 0$. Passing to a subsequence with $z_k \to z_\infty$, since $\Delta'_m$ are uniformly Lipschitz continuous, gives $\lim_{k\to\infty} \Delta'_{m_k}(z_\infty) = 0$. By compactness, we may pass to a further subsequence so that $S^{m_kp} \alpha$ converges pointwise to a right limit $\alpha^{(r)}$. For that right limit, $\Delta^{(r)}{}'(z_\infty)=0$ but $\Delta^{(r)}(z_\infty) < 2$. This contradicts Theorem~\ref{T2.1}(v), proving \eqref{3.9}.

By \eqref{3.5} and Theorem~\ref{T2.1}, we know that
\[
iz \Delta_m'(z) = \frac{\partial }{\partial \theta} \Delta_m(z)
\]
is real and nonzero for $z = e^{i\theta} \in I$. Denote by $\delta$ the $\inf$ in \eqref{3.9}. A change of sign of $i z \Delta_m'(z)$ between $m$ and $m+1$ then requires
\begin{equation}\label{3.10}
\left \lvert iz  \Delta_{m+1}'(z) - iz  \Delta_m'(z) \right\rvert \ge 2\delta.
\end{equation}
The inequality \eqref{3.2} applied to $\Delta_m'(z)$, together with \eqref{1.9}, implies that \eqref{3.10} is impossible for large enough $m$, so we conclude that $\sgn (iz \Delta_m'(z))$ is eventually constant. Therefore, after possibly adjusting $m_0$, we may assume that $\sgn ( iz \Delta_m'(z))$ is constant for all $m\ge m_0$ and $z\in  I$; combining this with \eqref{3.9} gives \eqref{3.6}.

The lower bound in \eqref{3.7} is proved analogously to \eqref{3.6}, using Theorem~\ref{T2.2}(iv) and reality of $i c_m(z)$ on $I$ (by Theorem~\ref{T2.2}(iii)).
\end{proof}

\section{Diagonalization of transfer matrices}\label{S4}

We will start with a closed arc $I\subset\partial\mathbb{D}$ such that \eqref{1.13} holds. Following Lemma~\ref{L3.3}, we pick $m_0 \in \mathbb{N}_0$, $\epsilon>0$, $s, t \in \{-1,+1\}$ such that \eqref{3.5}, \eqref{3.6}, \eqref{3.7} hold on $\Omega$ given by \eqref{3.8}.

The goal of this section is to diagonalize the $\tilde \Phi_m(z)$ for $m\ge m_0$ and $z\in \Omega$ in a way which obeys the necessary uniform estimates in $z$ and $m$. Our first lemma provides uniform estimates on solutions of $\lambda^2 - \Delta \lambda + 1=0$. The second lemma uses this to produce uniform estimates for eigenvalues of $\tilde\Phi_m(z)$.

\begin{lemma}\label{L4.1}
For $\lvert \Delta \rvert < 2$, let
\[
\lambda_\pm (\Delta) = \frac {\Delta \pm i \sqrt{4-\Delta^2}}2
\]
be the solutions of $\lambda^2 - \Delta \lambda + 1=0$, taking the branch of $\sqrt{~}$ on $\mathbb{C} \setminus (-\infty,0]$ such that $\sqrt 1 = 1$. For any $\epsilon >0$, there is a value of $C >0$ such that:
\begin{enumerate}[(i)]
\item $\lvert \Delta \rvert \le 2 - \epsilon$ and  $\Im \Delta \ge 0$ implies
\[
\lvert \lambda_+(\Delta) \rvert \ge 1 + C \Im \Delta, \qquad \lvert \lambda_-(\Delta) \rvert \le  1 - C \Im \Delta;
\]
\item   $\lvert \Delta \rvert \le 2 - \epsilon$ and $\Im \Delta \le 0$ implies
\[
\lvert \lambda_+(\Delta) \rvert \le 1 + C \Im \Delta, \qquad \lvert \lambda_-(\Delta) \rvert \ge  1 - C \Im \Delta;
\]
\item  $\lvert \Delta \rvert \le 2 - \epsilon$ implies
\[
\Im \lambda_+(\Delta) \ge C, \qquad \Im \lambda_-(\Delta) \le -C.
\]
\end{enumerate}
\end{lemma}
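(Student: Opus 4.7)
The plan is to reduce everything to the substitution $\Delta = 2\cos\zeta$ with $\zeta = \phi + i\psi$ in the vertical strip $\{0 < \phi < \pi\}$, under which the two roots of $\lambda^2 - \Delta\lambda + 1 = 0$ become simply $e^{\pm i\zeta}$. The map $\zeta \mapsto 2\cos\zeta$ is a biholomorphism of this strip onto $\mathbb{C} \setminus ((-\infty,-2] \cup [2,\infty))$, which contains the disk $\{|\Delta| \le 2-\epsilon\}$, and the matching with the prescribed branch of $\sqrt{~}$ is easily pinned down by evaluating at $\Delta = 0$: there $\zeta = \pi/2$, $e^{i\zeta} = i$, and $\lambda_+(0) = i$, so the correct identification is $\lambda_+ = e^{i\zeta}$ and $\lambda_- = e^{-i\zeta}$.

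From here, writing $\zeta = \phi + i\psi$ and using elementary trigonometric identities gives
\begin{align*}
\Delta &= 2\cos\phi\cosh\psi - 2i\sin\phi\sinh\psi, \\
|\Delta|^2 &= 4(\cos^2\phi + \sinh^2\psi), \\
|\lambda_\pm| &= e^{\mp\psi}, \qquad \Im\lambda_\pm = \pm e^{\mp\psi}\sin\phi.
\end{align*}
The constraint $|\Delta| \le 2-\epsilon$ then gives $|\cos\phi| \le 1-\epsilon/2$ and $|\sinh\psi| \le 1-\epsilon/2$, so there are constants $\delta(\epsilon), \Psi(\epsilon) > 0$ with $\sin\phi \ge \delta$ and $|\psi| \le \Psi$ throughout the region.

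The qualitative claims now follow immediately from $\sin\phi > 0$: the sign of $\Im\Delta = -2\sin\phi\sinh\psi$ is opposite to that of $\psi$, while $|\lambda_+| - 1$ and $-\psi$ have the same sign. For the quantitative bounds in (i) and (ii), the boundedness of $\psi$ in $[-\Psi,\Psi]$ makes $\sinh|\psi|$, $|\psi|$, and $|e^{-\psi}-1|$ all comparable to each other; combining $|\Im\Delta| = 2\sin\phi\sinh|\psi| \le 2\sinh|\psi|$ with this comparability yields $|\lambda_+|-1 \ge C\,\Im\Delta$ when $\Im\Delta \ge 0$, and the matching bound for $|\lambda_-|$ follows from $|\lambda_+||\lambda_-| = 1$ together with the a priori two-sided bound $e^{-\Psi} \le |\lambda_\pm| \le e^{\Psi}$. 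Part (iii) is then immediate: $\Im\lambda_\pm = \pm e^{\mp\psi}\sin\phi$ is bounded away from $0$ in the appropriate direction by $e^{-\Psi}\delta$.

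The only real point requiring care is the setup itself: verifying the biholomorphism of the $\cos$-strip and correctly matching the branch of $\sqrt{~}$ so that $\lambda_+ = e^{i\zeta}$ rather than $e^{-i\zeta}$. Once that is in hand, everything reduces to elementary estimates on a compact piece of the $\zeta$-strip.
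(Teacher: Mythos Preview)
Your proof is correct and takes a genuinely different route from the paper's argument. The paper proceeds by a direct computation: writing $\Delta = x + iy$, it calculates $\frac{\partial}{\partial y}|\lambda_+|^2$ explicitly, checks that this derivative is strictly positive on the region $\{|\Delta| < 2,\ y \ge 0\}$, obtains a uniform positive lower bound on $\{|\Delta| \le 2-\epsilon\}$ by compactness, and then integrates in $y$ from the real axis to get~(i). Part~(ii) is reduced to~(i) via the conjugation symmetry $\lambda_\pm(\bar\Delta) = \overline{\lambda_\mp(\Delta)}$, and~(iii) is handled by a separate continuity-and-compactness argument after observing that $\lambda_+ \in \mathbb{R}$ would force $|\Delta| \ge 2$.

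Your uniformization $\Delta = 2\cos\zeta$, $\lambda_\pm = e^{\pm i\zeta}$ is more conceptual: it replaces the somewhat involved derivative computation by the explicit formulas $|\lambda_\pm| = e^{\mp\psi}$, $\Im\Delta = -2\sin\phi\sinh\psi$, $\Im\lambda_\pm = \pm e^{\mp\psi}\sin\phi$, from which all three parts drop out uniformly once $\sin\phi$ is bounded below and $|\psi|$ above on the preimage of $\{|\Delta| \le 2-\epsilon\}$. This approach treats~(i),~(ii),~(iii) on equal footing and even yields explicit constants, whereas the paper's argument is more ad hoc but avoids setting up the conformal map and the branch-matching. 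Either way the content is elementary; your version is cleaner, the paper's is more self-contained for a reader unfamiliar with the $\cos$-strip parametrization.
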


\begin{proof}
Denote $\Delta = x + i y$ with $x,y\in\mathbb{R}$. Then $\frac{\partial \Delta}{\partial y}=i$,  $\frac{\partial \bar\Delta}{\partial y}=-i$, so we compute
\[
\frac{\partial}{\partial y} \lvert \lambda_+ \rvert^2 =  \frac{\partial}{\partial y} (\bar \lambda_+ \lambda_+) 
 = \frac 1{2} \Re \left((\bar \Delta - i \sqrt{4-\bar\Delta^2})(i + \frac{\Delta}{\sqrt{4-\Delta^2}} ) \right)
\]
which becomes
\[
\frac{\partial}{\partial y} \lvert \lambda_+ \rvert^2  =  \frac {y (\lvert 4 - \Delta^2\rvert + 4 + \lvert \Delta\rvert^2) + (\lvert \Delta\rvert^2 + \lvert 4-\Delta^2\rvert) \Re \sqrt{4-\Delta^2} }{ 2 \lvert 4-\Delta^2 \rvert} > 0
\]
when $\lvert \Delta \rvert <2$ and $y\ge 0$. Using $\lambda_+ \lambda_- = 1$, this implies
\begin{align*}
\frac{\partial}{\partial y} \lvert \lambda_+ \rvert & = \frac 1{2\lvert \lambda_+ \rvert} \frac{\partial}{\partial y} \lvert \lambda_+ \rvert^2 > 0 \\
\frac{\partial}{\partial y} \lvert \lambda_- \rvert & = - \frac 1{2\lvert \lambda_+ \rvert^3} \frac{\partial}{\partial y} \lvert \lambda_+ \rvert^2  < 0
\end{align*}
when $y\ge 0$. Continuity and compactness imply that for some $C>0$,
\[
\frac{\partial}{\partial y} \lvert \lambda_+ \rvert \ge C, \qquad 
\frac{\partial}{\partial y} \lvert \lambda_- \rvert \le -C
\]
uniformly in $\Delta$ with  $\lvert \Delta\rvert \le 2 -\epsilon$ and $y\ge 0$. Integrating in $y$ and using $\lvert \lambda_\pm (\Delta)\rvert = 1$ for $\Delta \in (-2,2)$ implies (i).

(ii) follows from (i) and $\lambda_\pm(\bar\Delta) = \overline{\lambda_\mp(\Delta)}$.

Note that $\lambda_+ \in \mathbb{R}$ would imply $\lambda_-  = \frac 1{\lambda_+} \in \mathbb{R}$ and $\lvert \Delta \rvert = \lvert \lambda_+ + \frac 1{\lambda_+} \rvert \ge 2$, which is a contradiction. Continuity and $\lambda_+(0) = i =  - \lambda_-(0)$ then imply
\[
\Im \lambda_+(\Delta) > 0 > \Im \lambda_-(\Delta)
\]
for $\lvert \Delta \rvert \le 2 - \epsilon$. By continuity and compactness, (iii) holds for some $C>0$.
\end{proof}

\begin{remark}
A part of the above calculations could have been skipped by only computing $\frac{\partial}{\partial y} \lvert \lambda_+ \rvert^2$ for $y=0$ and restricting the lemma to $\lvert \Im \Delta \rvert \le \epsilon$ for some $\epsilon$. However, to apply that to $\Delta_m$, we would then need a uniform upper bound for $\Im \Delta_m(z)$ in what follows. We chose instead to prove Lemma~\ref{L4.1} in more generality.
\end{remark}

We use the above lemma to choose an eigenvalue of $\tilde\Phi_m(z)$ in a consistent way:

\begin{lemma}\label{L4.2} With $s$ as in \eqref{3.6}, define
\[
\lambda_{m}(z) = \begin{cases} \lambda_+(\Delta_m(z)) & s = +1 \\
\lambda_-(\Delta_m(z)) & s = -1
\end{cases}
\]
Then $\lambda_m(z)$ and $\lambda^{-1}_m(z)$ are the eigenvalues of $\tilde \Phi_m(z)$, and they obey the following estimates for some $C>0$, uniformly in $m\ge m_0$, $z\in \Omega$:
\begin{equation}\label{4.1}
C \le s \Im \lambda_{m}(z) \le \lvert \lambda_{m}(z) \rvert \le 1 -  C (1 - \lvert z \rvert)
\end{equation}
\begin{equation}\label{4.2}
s \Im \lambda^{-1}_{m}(z) \le -C.
\end{equation}
\end{lemma}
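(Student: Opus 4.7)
The plan is to identify the two eigenvalues and then separately establish the $\Im$-bounds and the modulus bound. Since Theorem~\ref{T2.2}(i)--(ii) give $\det\tilde\Phi_m(z) = 1$ and $\tr\tilde\Phi_m(z) = \Delta_m(z)$, the characteristic polynomial of $\tilde\Phi_m(z)$ is exactly $\lambda^2 - \Delta_m(z)\lambda + 1$, whose roots are $\lambda_\pm(\Delta_m(z))$ from Lemma~\ref{L4.1}; since they multiply to $1$, they are $\lambda_m(z)$ and $\lambda_m^{-1}(z)$. By \eqref{3.5}, $\lvert\Delta_m(z)\rvert \le 2 - C$ on $\Omega$, so Lemma~\ref{L4.1}(iii) applies with a fixed $\epsilon$, and the case distinction in the definition of $\lambda_m(z)$ ensures that $s\Im\lambda_m(z) \ge C$ and $s\Im\lambda_m^{-1}(z) \le -C$ in either sign of $s$. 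This yields the lower bound in \eqref{4.1} and all of \eqref{4.2}; the middle inequality $s\Im\lambda_m(z) \le \lvert\lambda_m(z)\rvert$ is trivial, and for $z\in I$ the estimate $\lvert\lambda_m(z)\rvert = 1$ is automatic since $\Delta_m(z)\in\mathbb{R}$.

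The substantive step is the strict contraction $\lvert\lambda_m(z)\rvert \le 1 - C(1-\lvert z\rvert)$ as $z$ moves into the disk from $I$. This will follow from Lemma~\ref{L4.1}(i)--(ii) once we establish the auxiliary uniform estimate
\begin{equation*}
s\Im\Delta_m(re^{i\theta}) \le -C(1-r), \qquad e^{i\theta} \in I,\ 1-\epsilon \le r \le 1,
\end{equation*}
possibly after shrinking $\epsilon$. To prove this, fix $e^{i\theta} \in I$ and Taylor expand in $r$ at $r = 1$: since $\Delta_m(e^{i\theta}) \in \mathbb{R}$ by Theorem~\ref{T2.1}(iii), we have $\Im\Delta_m(re^{i\theta}) = (r-1)\Im\partial_r\Delta_m(e^{i\theta}) + O((1-r)^2)$. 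The polar Cauchy--Riemann identity $\partial_r\Delta_m(re^{i\theta}) = e^{i\theta}\Delta_m'(re^{i\theta})$ specialized at $r=1$ gives $\partial_r\Delta_m\big|_{r=1} = z\Delta_m'(z)\big|_{z=e^{i\theta}}$, so $\Im\partial_r\Delta_m(e^{i\theta}) = \Im(z\Delta_m'(z))$; by \eqref{3.6}, $s\Im(z\Delta_m'(z)) \ge C$ on $I$. Since $r - 1 \le 0$, multiplying by $s$ and applying this inequality produces a linear term $\le -C(1-r)$. A uniform Cauchy estimate on $\Delta_m''(z)$ (available from Lemma~\ref{L3.1} on a slightly larger annulus) controls the quadratic remainder uniformly in $m$ and $\theta$, so after shrinking $\epsilon$ the auxiliary estimate holds with $C$ replaced by $C/2$.

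To finish, apply Lemma~\ref{L4.1}(i)--(ii). If $s = +1$ then $\lambda_m = \lambda_+$ and $\Im\Delta_m \le -C(1-\lvert z\rvert) \le 0$, so Lemma~\ref{L4.1}(ii) gives $\lvert\lambda_+\rvert \le 1 + C_0\Im\Delta_m \le 1 - CC_0(1-\lvert z\rvert)$. If $s = -1$ then $\lambda_m = \lambda_-$ and $\Im\Delta_m \ge C(1-\lvert z\rvert) \ge 0$, so Lemma~\ref{L4.1}(i) gives $\lvert\lambda_-\rvert \le 1 - C_0\Im\Delta_m \le 1 - CC_0(1-\lvert z\rvert)$. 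In either case we obtain the upper bound in \eqref{4.1}. I expect the main obstacle to be the Taylor-expansion step: namely, cleanly converting the boundary derivative bound \eqref{3.6} into an off-boundary bound on $\Im\Delta_m$, with uniform control of the quadratic error coming from Lemma~\ref{L3.1}; once that is done, the rest is a direct assembly using Lemma~\ref{L4.1}.
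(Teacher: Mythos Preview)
Your proof is correct and follows the same overall structure as the paper's: identify the eigenvalues via the characteristic polynomial, obtain the $\Im$-bounds from Lemma~\ref{L4.1}(iii), and derive the modulus bound by first establishing $-s\Im\Delta_m(re^{i\theta}) \ge C(1-r)$ and then invoking Lemma~\ref{L4.1}(i)--(ii).

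The only difference is in how the auxiliary estimate on $\Im\Delta_m$ is obtained. You Taylor-expand at $r=1$, use \eqref{3.6} only on $I$, and control the quadratic remainder via a Cauchy estimate on $\Delta_m''$, which forces you to shrink $\epsilon$. The paper instead uses that \eqref{3.6} holds throughout $\Omega$, not just on $I$: writing $s\,\partial_r\Im\Delta_m(re^{i\theta}) = \tfrac{s}{r}\Im\bigl(re^{i\theta}\Delta_m'(re^{i\theta})\bigr) \ge C$ on all of $\Omega$ and integrating from $r$ to $1$ with the boundary value $\Im\Delta_m(e^{i\theta})=0$ yields $-s\Im\Delta_m(re^{i\theta}) \ge C(1-r)$ directly, with no remainder term and no need to shrink $\epsilon$. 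This eliminates exactly the step you flagged as the main obstacle.
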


\begin{proof}
$\lambda_{m}(z)$  and $\lambda^{-1}_{m}(z)$ are eigenvalues of $\tilde\Phi_m(z)$ since $\det\tilde\Phi_m(z)=1$ and $\tr\tilde\Phi_m(z)=\Delta_m(z)$.
Note that
\[
\frac{\partial}{\partial r} \Delta_m(r e^{i\theta}) =  e^{i\theta} \Delta'_m(r e^{i\theta})
\]
so, taking imaginary parts and multiplying by $s$,
\[
s \frac{\partial}{\partial r} \Im  \Delta_m(r e^{i\theta}) = \frac sr \Im \left( r e^{i\theta} \Delta'_m(r e^{i\theta})    \right) \ge C
\]
for some $C>0$ independent of $m$ and $z$, by \eqref{3.6}. Integrating in $r$, together with $\Im \Delta_m(e^{i\theta}) = 0$, gives
\[
-s \Im \Delta_m(r e^{i\theta}) = \int_r^1 s \frac{\partial}{\partial r} \Im  \Delta_m( t e^{i\theta}) d t  \ge C (1 -r).
\]
Combining this with  Lemma~\ref{L4.1}(i),(ii) implies the upper bound in \eqref{4.1} (with a different value of $C>0$). The bounds on $s \Im \lambda_m^{\pm 1}(z)$ follow from Lemma~\ref{L4.1}(iii).
\end{proof}

We wish to diagonalize $\tilde \Phi_m$ as
\[
\tilde\Phi_m(z) = U_m(z) \Lambda_m(z) U_m(z)^{-1}, \qquad \Lambda_m(z) = \begin{pmatrix} \lambda_{m}(z) & 0 \\ 0 & \lambda^{-1}_{m}(z) \end{pmatrix}
\]
so columns of $U_m$ should be eigenvectors of $\tilde\Phi_m$. We choose $U_m(z)$ as
\begin{equation}\label{4.3}
U_m(z) = \begin{pmatrix} \lambda_{m}(z) - d_m(z) &  \lambda^{-1}_{m}(z) - d_m(z) \\  c_m(z) & c_m(z) \end{pmatrix}.
\end{equation}
The determinant of $U_m(z)$ is
\[
\det U_m = (\lambda_{m} - \lambda^{-1}_{m}) c_m,
\]
which is non-zero since in the region of interest, $c_m(z) \neq 0$ and $\lambda_{m}(z) \neq \lambda^{-1}_{m}(z)$ (this follows from \eqref{3.7} and Lemma~\ref{L4.2}). We also compute
\begin{equation}\label{4.4}
U_m^{-1} = \frac 1{(\lambda_{m} - \lambda^{-1}_{m}) c_m} \begin{pmatrix} c_m  & d_m - \lambda^{-1}_{m} \\ - c_m & \lambda_{m} - d_m \end{pmatrix}
\end{equation}
and we define
\[
W_m = U_m^{-1} U_{m+1} - I.
\]

\begin{lemma}\label{L4.3}
For some value of $C<\infty$, uniformly in $m\ge m_0$ and $z\in \Omega$ we have
\begin{equation}\label{4.5}
\lVert W_m \rVert \le C \sum_{k=0}^{p-1} \lvert \alpha_{(m+1)p+k} - \alpha_{mp+k} \rvert.
\end{equation}
\end{lemma}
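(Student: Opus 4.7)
My plan is to write
\[
W_m = U_m^{-1}(U_{m+1}-U_m),
\]
so that $\lVert W_m\rVert \le \lVert U_m^{-1}\rVert \cdot \lVert U_{m+1}-U_m\rVert$, and then bound the two factors separately. The first will be uniformly bounded by a constant, and the second will carry the full $\sum_{k=0}^{p-1}\lvert\alpha_{(m+1)p+k}-\alpha_{mp+k}\rvert$ factor.

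For the bound on $\lVert U_m^{-1}\rVert$, I would use the explicit formula \eqref{4.4}. Its entries are $c_m$, $d_m-\lambda_m^{\pm 1}$, which are uniformly bounded above: $c_m$, $d_m$ by Lemma~\ref{L3.1} and $\lambda_m^{\pm 1}$ because $\lvert\lambda_m\rvert\le 1$ by \eqref{4.1} while $\lvert\lambda_m^{-1}\rvert = 1/\lvert\lambda_m\rvert\le 1/(s\Im\lambda_m)\le 1/C$. The prefactor $1/((\lambda_m-\lambda_m^{-1})c_m)$ is uniformly bounded because $\lvert c_m\rvert\ge C$ by \eqref{3.7}, while
\[
\lvert\lambda_m-\lambda_m^{-1}\rvert \ge \lvert\Im(\lambda_m-\lambda_m^{-1})\rvert = s\Im\lambda_m - s\Im\lambda_m^{-1} \ge 2C
\]
by Lemma~\ref{L4.2}. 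Hence $\lVert U_m^{-1}\rVert \le C'$ uniformly on $\Omega$ for $m\ge m_0$.

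For the second factor, \eqref{4.3} shows $U_{m+1}-U_m$ is a $2\times 2$ matrix whose entries are the differences $\lambda_{m+1}^{\pm 1}-\lambda_m^{\pm 1}$, $d_{m+1}-d_m$, $c_{m+1}-c_m$, $(\lambda_{m+1}-d_{m+1})-(\lambda_m-d_m)$. The differences of $c_m$ and $d_m$ are directly controlled by \eqref{3.2} of Lemma~\ref{L3.1}. For $\lambda_m^{\pm 1}$, I would argue that by \eqref{3.5}, $\Delta_m(z)$ lies in the compact set $\{\lvert\Delta\rvert\le 2-C\}\subset\{\lvert\Delta\rvert<2\}$, on which $\lambda_\pm(\Delta)$ from Lemma~\ref{L4.1} are analytic and thus Lipschitz. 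Therefore
\[
\lvert\lambda_{m+1}-\lambda_m\rvert \le C_1\lvert\Delta_{m+1}(z)-\Delta_m(z)\rvert,
\]
and similarly for $\lambda_m^{-1}$ (using in addition that $\lvert\lambda_m\rvert\ge s\Im\lambda_m\ge C$ keeps $\lambda\mapsto\lambda^{-1}$ Lipschitz). Applying \eqref{3.2} to $\Delta_m$ gives the required bound for these differences as well. Combining everything,
\[
\lVert U_{m+1}-U_m\rVert \le C_2\sum_{k=0}^{p-1}\lvert\alpha_{(m+1)p+k}-\alpha_{mp+k}\rvert,
\]
which finishes the proof.

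There is no real obstacle; the proof is essentially bookkeeping, but it hinges on the uniform estimates established in Sections~\ref{S3} and \ref{S4}. The one place to be careful is the lower bound on $\lvert\lambda_m-\lambda_m^{-1}\rvert$ (needed to control the determinant of $U_m$), which is where Lemma~\ref{L4.2} genuinely enters and cannot be replaced by Lemma~\ref{L4.1} alone, since $\Delta_m(z)$ may be complex when $z\notin\partial\mathbb{D}$.
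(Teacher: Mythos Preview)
Your proof is correct and follows the same approach as the paper: write $W_m = U_m^{-1}(U_{m+1}-U_m)$, bound $\lVert U_m^{-1}\rVert$ uniformly, and bound $\lVert U_{m+1}-U_m\rVert$ by the sum of Verblunsky differences via \eqref{3.2}. The paper's version is terser (it just says $\lambda_m$ and $U_m$ inherit the properties of Lemma~\ref{L3.1}), whereas you spell out explicitly how the lower bound on $\lvert\det U_m\rvert$ comes from \eqref{3.7} and Lemma~\ref{L4.2}, and how the Lipschitz dependence of $\lambda_m$ on the $\alpha$'s passes through $\lambda_\pm(\Delta)$ and \eqref{3.5}; this added detail is welcome but does not change the argument.
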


\begin{proof}
From the definitions of $\lambda_{m}(z)$ and $U_m(z)$ and Lemma~\ref{L3.1}, it is clear that $\lambda_{m}(z)$ and $U_m(z)$ have the same properties listed in that lemma. Therefore, for some $C < \infty$, we have $\lVert U_m^{-1} \rVert \le C$ and
\[
\lVert U_{m+1} - U_m \rVert \le C \sum_{k=0}^{p-1} \lvert \alpha_{(m+1)p+k} - \alpha_{mp+k} \rvert.
\]
Now \eqref{4.5}, with a different $C$, follows from $W_m = U_m^{-1} (U_{m+1} - U_m)$.
\end{proof}

\section{Approximants and Weyl solutions}\label{S5}

In this section, we carry over an idea of Kaluzhny--Shamis~\cite{KaluzhnyShamis12} to introduce approximants to $\mu$ with eventually periodic sequences of coefficients, and relate their absolutely continuous parts to certain Weyl solutions.

Define the measure $\mu^N$, $N=0,1,\dots$, so that its first $(N+1)p$ Verblunsky coefficients agree with those of $\mu$, and extending the sequence by $p$-periodicity after that; i.e., the Verblunsky coefficients of $\mu^N$ are
\begin{equation}\label{5.1}
\alpha^N_{mp+r} = \begin{cases} \alpha_{mp+r} & m<N, r = 0,1,\dots, p-1  \\
\alpha_{Np+r} & m\ge N, r=0,1,\dots, p-1
\end{cases}
\end{equation}
We will also denote other quantities corresponding to $\mu^N$ with the superscript $N$; for instance, the $p$-step transfer matrices corresponding to $\mu^N$ are, by \eqref{5.1}, $ \Phi^N_m = \Phi_{\min(N,m)}$, and the modified transfer matrices are
\[
\tilde \Phi^N_m(z) = \tilde \Phi_{\min(N,m)}(z).
\]
For $N\ge m_0$, we now single out a solution $u^N(z)$ of the recursion relation
\[
u^N_{n+1} (z)= \tilde \Phi^N_n (z) u^N_n(z).
\]
Since all $\tilde \Phi_n$ are invertible, we can specify the solution by setting its value at $n=N$,
\begin{equation}\label{5.2}
u^N_N(z) = \begin{pmatrix} \lambda_{N}(z) - d_N(z) \\ c_N(z) \end{pmatrix}.
\end{equation}
Let $\mu^N$ have the Lebesgue decomposition
\[
d\mu^N = w^N \frac{d\theta}{2\pi} + d\mu^N_\s. 
\]
We will now describe $w^N$ in terms of $u^N$.

\begin{lemma}\label{L5.1}
Let $N\ge m_0$. For every $z\in I$, $(u_0^N)_2(z) \neq 0$. For Lebesgue-a.e.\ $z\in I$,
\begin{equation}\label{5.3}
w^N(z) =  - \frac{ i c_N(z) \Im \lambda_{N}(z) }{\lvert (u^N_0)_2(z) \rvert^2}.
\end{equation}
\end{lemma}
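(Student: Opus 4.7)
The plan is to identify $u^N$ as the Weyl solution of the eventually-periodic CMV problem associated to $\mu^N$, express the Carath\'eodory function $F^N(z) = \int \frac{e^{i\theta}+z}{e^{i\theta}-z}\,d\mu^N(\theta)$ in terms of $u^N_0$, and then recover the weight via $w^N(z) = \lim_{r\to 1^-}\Re F^N(rz)$. Since $\tilde\Phi^N_n = \tilde\Phi_N$ is constant in $n$ for $n\ge N$, the initial condition \eqref{5.2} makes $u^N_N$ an eigenvector of $\tilde\Phi_N$ with eigenvalue $\lambda_N(z)$, so $u^N_n = \lambda_N(z)^{n-N}u^N_N$ for $n\ge N$, which decays exponentially as $n\to\infty$ when $|z|<1$ by \eqref{4.1}. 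Introduce the second-kind companion $\chi^N_n = (-\psi^N_n,\psi^{N*}_n)^T$, which satisfies the same Szeg\H o recursion as $\Psi^N_n = (\varphi^N_n,\varphi^{N*}_n)^T$ (a direct check using that the $\psi^N_n$ obey the recursion with Verblunsky coefficients $-\alpha^N_n$). Their $M$-conjugated $p$-step versions $\tilde\Psi^N_n := z^{-np/2}M\Psi^N_{np}$ and $\tilde\chi^N_n := z^{-np/2}M\chi^N_{np}$ form a basis of solutions of $v_{n+1}=\tilde\Phi^N_nv_n$, with $\tilde\Psi^N_0=(\sqrt 2,0)^T$ and $\tilde\chi^N_0=(0,-\sqrt 2)^T$. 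Expanding
\[
u^N_n = \frac{(u^N_0)_1}{\sqrt 2}\,\tilde\Psi^N_n \;-\; \frac{(u^N_0)_2}{\sqrt 2}\,\tilde\chi^N_n
\]
and comparing with the classical OPUC Weyl characterization (\cite[Section~3.2]{OPUC1}) of the decaying $\ell^2$-at-infinity solution in terms of $F^N$, a matching of conventions gives
\[
F^N(z) = \frac{(u^N_0)_1(z)}{(u^N_0)_2(z)}, \quad |z|<1,
\]
and this extends continuously up to $I$ by standard boundary-value arguments.

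For the non-vanishing claim: by \eqref{3.5}, $I$ lies strictly inside a band of the discriminant of the periodic tail of $\mu^N$, on which $F^N$ is continuous and finite (its poles, coming from atoms of $\mu^N$, occur only in spectral gaps or at band edges). A zero of $(u^N_0)_2$ at some $z\in I$ would therefore either force $|F^N(z)|=\infty$ or, if $(u^N_0)_1$ vanished as well, force $u^N_N=0$ by the invertibility of each $\tilde\Phi^N_n$; the latter is impossible since $c_N\ne 0$ on $I$ by \eqref{3.7}.

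The formula \eqref{5.3} now follows by writing $w^N(z) = \Re F^N(z) = \Re((u^N_0)_1\overline{(u^N_0)_2})/|(u^N_0)_2|^2$ and observing that $\Re(u_n^{(1)}\overline{u_n^{(2)}})$ is conserved along the recursion on $\partial\mathbb{D}$. Indeed, by Theorem~\ref{T2.2}(iii) we have $a_n,d_n\in\mathbb{R}$ and $b_n,c_n\in i\mathbb{R}$, so $\bar a_n=a_n$, $\bar b_n=-b_n$, $\bar c_n=-c_n$, $\bar d_n=d_n$. Expanding $u_{n+1}^{(1)}\overline{u_{n+1}^{(2)}}$ with these rules produces diagonal terms with coefficients $a_nc_n$ and $b_nd_n$ which are purely imaginary and drop when we take real parts, leaving
\[
\Re\bigl(u_{n+1}^{(1)}\overline{u_{n+1}^{(2)}}\bigr) = (a_nd_n - b_nc_n)\,\Re\bigl(u_n^{(1)}\overline{u_n^{(2)}}\bigr) = \Re\bigl(u_n^{(1)}\overline{u_n^{(2)}}\bigr)
\]
by $\det\tilde\Phi^N_n=1$. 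Evaluating at $n=N$ with $u^N_N = (\lambda_N - d_N,\,c_N)^T$, $d_N\in\mathbb{R}$, and $\bar c_N = -c_N$ yields $\Re((u^N_N)_1\overline{(u^N_N)_2}) = -ic_N\,\Im\lambda_N$, which is precisely the numerator of \eqref{5.3}.

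The main technical obstacle is the Weyl-theoretic identification $F^N = (u^N_0)_1/(u^N_0)_2$: pinning down the correct direction of the ratio and the sign across the $M$-conjugation and the second-kind companion requires careful bookkeeping of normalizations, but once set up, everything else is algebraic, and the conservation law together with the reality properties from Theorem~\ref{T2.2}(iii) yields the stated formula.
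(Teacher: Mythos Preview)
Your proposal is correct and follows essentially the same route as the paper: identify $u^N$ as a Weyl solution for $\lvert z\rvert<1$, deduce $F^N(z)=(u^N_0)_1(z)/(u^N_0)_2(z)$, establish constancy of $\Re\bigl((u^N_n)_1\overline{(u^N_n)_2}\bigr)$ for $z\in I$, and evaluate at $n=N$. The paper phrases the conservation law via $\tilde J$-unitarity (from $A(\alpha,z)^*JA(\alpha,z)=J$ one gets $\tilde\Phi_n^*\tilde J\tilde\Phi_n=\tilde J$ with $\tilde J=MJM=\bigl(\begin{smallmatrix}0&1\\1&0\end{smallmatrix}\bigr)$, hence $\langle u^N_n,\tilde J u^N_n\rangle$ is constant), but your direct expansion using Theorem~\ref{T2.2}(iii) is equivalent.

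One organizational point: the paper proves the conservation identity \emph{first} and uses it to obtain non-vanishing, since $\Re\bigl((u^N_0)_1\overline{(u^N_0)_2}\bigr)=-ic_N\Im\lambda_N\neq 0$ by \eqref{3.7} and \eqref{4.1} immediately forces $(u^N_0)_2\neq 0$. Your argument instead invokes that the boundary values of $F^N$ are finite on the band interior for eventually periodic Verblunsky coefficients; this is true but imports a spectral fact you do not need, since the algebraic identity you already computed gives the conclusion directly and more cleanly.
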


\begin{remark}
By Theorem~\ref{T2.2}(iii), we already know that the right hand side of \eqref{5.3} is real-valued. In fact, comparing $w^N(z)\ge 0$ with \eqref{3.7} and \eqref{4.1} gives $s=t$. This observation will not be needed in what follows.
\end{remark}

\begin{proof}
For $\lvert z \rvert =1$, the relation $A(\alpha,z)^* J A(\alpha,z) = J$ holds for all $\alpha\in\mathbb{D}$, where
\[
J = \begin{pmatrix} 1 & 0 \\ 0 & -1 \end{pmatrix}.
\]
This implies $\Phi_n(z)^* J \Phi_n(z) = J$, and then $\tilde\Phi_n(z)^* \tilde J \tilde \Phi_n(z) = \tilde J$, where
\[
\tilde J = M J M = \begin{pmatrix} 0 & 1 \\ 1 & 0 \end{pmatrix}.
\]
This implies constancy of the Wronskian in the form
\[
\langle u^N_0(z), \tilde J u^N_0(z) \rangle = \langle u^N_N(z), \tilde J  u^N_N(z) \rangle,
\]
which simplifies to
\[
2 \Re ( (u^N_0)_1(z) \overline{ (u^N_0)_2(z)}) = 2 \Re ( (u^N_N)_1(z) \overline{ (u^N_N)_2(z)} ).
\]
Using \eqref{5.2} and Theorem~\ref{T2.2}(iii), this simplifies to
\begin{equation}\label{5.4}
2 \Re ( (u^N_0)_1(z) \overline{ (u^N_0)_2(z)}) = - 2 i c_N(z) \Im \lambda_{N}(z).
\end{equation}
In particular, by \eqref{3.7} and \eqref{4.1}, this implies that $(u^N_0)_1(z) \overline{ (u^N_0)_2(z)} \neq 0$ for $z\in I$.

From  $\tilde \Phi^N_n u^N_N = \lambda_{N} u^N_N$ for $n\ge N$ and $\lvert \lambda_{N} \rvert < 1$ it follows that $z^{nq/2} M u^N_n$ is a Weyl solution for $\lvert z \rvert < 1$ (see \cite[Section 2.3]{Rice} for definition and properties). However, recall that
\[
v_n = \Phi^N_{n-1} \cdots \Phi^N_{0} \begin{pmatrix} 1 \\ z f^N(z) \end{pmatrix}
\]
is also a Weyl solution for $\lvert z \rvert <1$, where $f^N$ is the Schur function for $\mu^N$. The Caratheodory function for $\mu^N$ is
\[
F^N(z) = \frac{1+z f^N(z)}{1-zf^N(z)},
\]
which we rewrite as
\[
M\begin{pmatrix} 1 \\ z f^N(z) \end{pmatrix} = \frac 1{\sqrt 2} \begin{pmatrix} 1 + z f^N(z) \\ 1- z f^N(z) \end{pmatrix} = \frac {1-z f^N(z)}{\sqrt 2} \begin{pmatrix} F^N(z) \\ 1 \end{pmatrix}.
\]
Since Weyl solutions are unique up to a multiplicative constant, we conclude that $\begin{pmatrix} F^N(z) \\ 1 \end{pmatrix}$ is a multiple of $u_0^N$, so
\[
F^N(z) = \frac{(u^N_0)_1(z)}{(u^N_0)_2(z)}.
\]
For almost every $z\in \partial \mathbb{D}$, the nontangential limit of $\Re F^N(z)$ is equal to $w^N(z)$, so
\[
w^N(e^{i\theta}) = \lim_{r\uparrow 1} \Re \frac{(u^N_0)_1(r e^{i\theta})}{(u^N_0)_2(r e^{i\theta})}.
\]
The limit exists for all $e^{i\theta} \in I$ because $u^N_N$, and so $u^N_n$ for every $n$, is continuous in $z \in \Omega$. Using \eqref{5.4}, this simplifies to \eqref{5.3}.
\end{proof}

\section{Conclusion of the proof of Theorem~\ref{T1.4}}\label{S6}

In this section, we carry over the method of Denisov and Kaluzhny--Shamis~\cite{Denisov09, KaluzhnyShamis12} to OPUC, with the modifications necessary to handle the lack of asymptotic convergence.

Coefficient stripping is the process of removing the leading Verblunsky coefficient, i.e.\ replacing a measure $\mu$ with Verblunsky coefficients $\alpha$ by the measure $\mu_1$ with Verblunsky coefficients $S\alpha$. It is well known that this operation does not affect the validity of conclusions of Theorem~\ref{T1.4}; for instance, this follows from properties of the relative Szeg\H o function \cite[Theorem 2.6.2]{Rice}.

We can use this to perform coefficient stripping finitely many times and prove the result for the measure obtained in this way, from which the result for the original measure will follow. Thus, in the following we may assume that all the above estimates, derived for $m\ge m_0$, now hold for all $m\ge 0$. By additional coefficient stripping, we may also assume that
\begin{equation}\label{6.1}
\sum_{n=0}^\infty \lVert W_n\rVert^2 < \delta
\end{equation}
for a suitably chosen $\delta >0$ (to be chosen later).

The recursion relation for $u^N_n$, solved backwards, gives
\[
u^N_0 = \tilde \Phi_0^{-1}  \cdots \tilde \Phi_{N-1}^{-1} u^N_N.
\]
Using the diagonalization of $\tilde\Phi_n$, this becomes
\[
u^N_0 = U_0 \Lambda_0^{-1} U^{-1}_0 \cdots  U_{N-1} \Lambda_{N-1}^{-1}  U_{N-1}^{-1} u^N_N.
\]
A direct calculation shows $U_N^{-1} u^N_N = \begin{pmatrix} 1\\ 0 \end{pmatrix}$, so

\begin{equation}\label{6.2}
U_0^{-1} u^N_0 =   \Lambda_0^{-1} (I + W_0) \cdots \Lambda_{N-1}^{-1} (I + W_{N-1}) \begin{pmatrix} 1\\ 0 \end{pmatrix}.
\end{equation}

We will now need a lemma of Denisov~\cite{Denisov09}, made precisely to estimate such products.

\begin{thm}[{\cite[Theorem 2.1]{Denisov09}}] \label{T6.1}
Let
\begin{equation}\label{6.3}
\Psi_{n+1} =  \begin{pmatrix}  \kappa_n & 0 \\ 0 &  \kappa_n^{-1} \end{pmatrix}  (I + W_n) \Psi_n, \quad W_n = \begin{pmatrix} e_n & f_n \\ g_n & h_n \end{pmatrix}, \quad \Psi_0 = \begin{pmatrix} 1 \\ 0 \end{pmatrix}
\end{equation}
where $\kappa_n \in \mathbb{C}$,
\begin{equation}\label{6.4}
C > \lvert \kappa_n \rvert >  \kappa >1,
\end{equation}
and the sum $\sum_{n=0}^\infty \lVert W_n \rVert^2$ is finite and sufficiently small. Assume also there is a constant $v \in [0,1)$ such that
\[
\left\lvert \log \prod_{n=k}^l \lvert 1 + e_n \rvert \right\rvert \le C + C v \sqrt{l-k}, \qquad \left\lvert \log \prod_{n=k}^l \lvert 1 + h_n \rvert \right\rvert \le C + C v \sqrt{l-k}.
\]
Then there is a value of $C_1 \in (0,\infty)$, which depends only on $C$, such that
\[
\Psi_n = \prod_{j=0}^{n-1} \left( \kappa_j (1+e_j) \right) \begin{pmatrix} \phi_n \\ \nu_n \end{pmatrix}
\]
where
\[
\lvert \phi_n \rvert, \lvert \nu_n \rvert \le C_1 \exp \left( \frac {C_1}{ \kappa -1} \exp\left( \frac{C_1 v^2}{\kappa - 1 } \right)  \right)
\]
Moreover, for any fixed $\epsilon>0$ and $\kappa> 1+\epsilon$, we have
\[
\lvert \phi_n \rvert > C_1^{-1} > 0, \qquad \lvert \nu_n \rvert < C_1 \sum_{j=0}^\infty \lVert W_j \rVert^2
\]
uniformly in $n$ provided that $\sum_{j=0}^\infty \lVert W_j \rVert^2$ is small enough.
\end{thm}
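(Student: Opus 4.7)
\noindent\textbf{Proof sketch of Theorem~\ref{T6.1}.}
The plan is to factor out the ``expected'' scalar growth and then treat the remaining vector recursion by a contraction/perturbation argument. Set $P_n = \prod_{j=0}^{n-1}\kappa_j(1+e_j)$ and write $\Psi_n = P_n V_n$ with $V_n = \begin{pmatrix}\phi_n\\\nu_n\end{pmatrix}$, $V_0 = \begin{pmatrix}1\\0\end{pmatrix}$. Dividing \eqref{6.3} by $\kappa_n(1+e_n)$ gives the explicit recursion
\[
\phi_{n+1} = \phi_n + \frac{f_n}{1+e_n}\nu_n, \qquad \nu_{n+1} = \frac{g_n}{\kappa_n^2(1+e_n)}\phi_n + \frac{1+h_n}{\kappa_n^2(1+e_n)}\nu_n.
\]
The point of this normalization is that $\phi_n$ is only perturbed by $\nu_n$ multiplied by a small off-diagonal entry, while $\nu_n$ sees a contractive coefficient of modulus at most $|1+h_n|/(|\kappa_n|^2|1+e_n|)$, which is dominated by $\kappa^{-2}<1$ up to the ratio of ``bad'' products controlled by hypothesis.

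Next, I solve the $\nu$-recursion by variation of parameters:
\[
\nu_n = \sum_{j=0}^{n-1} K_{n,j}\,\phi_j, \qquad K_{n,j} = \left(\prod_{k=j+1}^{n-1}\frac{1+h_k}{\kappa_k^2(1+e_k)}\right)\frac{g_j}{\kappa_j^2(1+e_j)}.
\]
The telescoping bounds assumed on $\prod|1+e_k|$ and $\prod|1+h_k|$ give $|K_{n,j}| \le C'\,\kappa^{-2(n-j)} e^{C' v\sqrt{n-j}}\,|g_j|$. Cauchy--Schwarz then yields
\[
|\nu_n|^2 \le \Bigl(\sup_{j<n}|\phi_j|\Bigr)^{2} \cdot \sum_{j=0}^{n-1}|g_j|^2 \cdot C'^{\,2}\sum_{m\ge 1}\kappa^{-4m}e^{2C'v\sqrt{m}}.
\]
The last sum is where the quantitative bound of the theorem is manufactured: optimizing the exponent $2C'v\sqrt{m}-2m\log\kappa$ over $m$ gives a maximum of order $(Cv)^2/(\kappa-1)$, producing the double-exponential factor $\exp\!\bigl(\frac{C_1}{\kappa-1}\exp(\frac{C_1 v^2}{\kappa-1})\bigr)$ in the a priori bound. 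Writing $M_n := \sup_{j\le n}|\phi_j|$, this gives $|\nu_n|\le \Theta M_n \bigl(\sum\|W_j\|^2\bigr)^{1/2}$ with $\Theta$ the double-exponential constant.

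Feeding this back into the $\phi$-recursion, $|\phi_{n+1}-\phi_n|\le |f_n|\,|\nu_n|\le |f_n|\cdot \Theta M_n(\sum\|W_j\|^2)^{1/2}$, and summing while using Cauchy--Schwarz once more bounds $|\phi_{n}-1|\le \Theta\,M_n\sum\|W_j\|^2$. For $\sum\|W_j\|^2$ small enough (in terms of $\Theta$), a standard bootstrap closes: $M_n\le 2$ for all $n$, and consequently $|\nu_n|\le 2\Theta(\sum\|W_j\|^2)^{1/2}$. For the second, sharper conclusion, the hypothesis $\kappa>1+\epsilon$ makes $\Theta$ an $\epsilon$-dependent constant, and repeating the Cauchy--Schwarz estimate on $\nu_n$ using the kernel $K_{n,j}$ directly in $\ell^2$ (rather than through $|g_j|$ pointwise) gives $|\nu_n|\le C_1\sum\|W_j\|^2$ uniformly in $n$; the lower bound $|\phi_n|>C_1^{-1}$ then follows from $\phi_0=1$ and the same bootstrap once $\sum\|W_j\|^2$ is taken small.

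The main obstacle is the delicate two-parameter quantitative bound: the ``stretched exponential'' $e^{C'v\sqrt{m}}$ arising from the hypothesis competes with the geometric contraction $\kappa^{-2m}$, and one must carry out the saddle-point optimization carefully to obtain the precise form $\exp\bigl(\frac{C_1}{\kappa-1}\exp(\frac{C_1 v^2}{\kappa-1})\bigr)$ and to verify that the bootstrap constant degenerates only through this factor (and not, say, through the number of iterations).
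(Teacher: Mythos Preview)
The paper does not prove this theorem: it is quoted verbatim from \cite[Theorem~2.1]{Denisov09}. The only argument supplied in the paper is the short remark after the statement, which notes that the version here differs from Denisov's only in the order of the factors $\begin{pmatrix}\kappa_n & 0\\ 0 & \kappa_n^{-1}\end{pmatrix}$ and $I+W_n$, and shows how to reduce to the original formulation by inserting dummy factors $I+W_N=I$ and $\kappa_{-1}=\tfrac{C+\kappa}2$ and regrouping. So you are attempting something the paper does not: a proof from scratch of Denisov's lemma.

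Your outline (factor out the scalar growth, solve the $\nu$-recursion by variation of parameters, estimate the kernel, bootstrap) is indeed the skeleton of Denisov's argument, and it correctly yields the ``moreover'' conclusion, since for $\kappa>1+\epsilon$ your $\Theta$ is an $\epsilon$-dependent constant and the bootstrap closes. There is, however, a genuine gap in how you obtain the first, double-exponential bound. Optimizing $2C'v\sqrt m - 4m\log\kappa$ gives a maximum of order $v^2/(\kappa-1)$, so the kernel sum $\sum_m \kappa^{-4m}e^{2C'v\sqrt m}$ is of order $\frac{1}{\kappa-1}\exp\bigl(\frac{C'v^2}{\kappa-1}\bigr)$ --- a \emph{single} exponential, not a double one. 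With this $\Theta$, your bootstrap $|\phi_n-1|\le \Theta M_n\sum\|W_j\|^2$ only closes to $M_n\le 2$ when $\sum\|W_j\|^2$ is small compared to $\Theta^{-1}$, i.e.\ small depending on $\kappa$. The first conclusion is meant to hold with $\sum\|W_j\|^2$ small only in terms of $C$, uniformly as $\kappa\downarrow 1$; the double exponential records precisely the blow-up in that regime. It arises not from the kernel optimization but from a Gronwall-type iteration of the integral inequality for $M_n$: the outer factor $\exp\bigl(\frac{C_1}{\kappa-1}\,\cdot\,\bigr)$ comes from exponentiating the accumulated kernel sum $\Theta$, which is where the inner $\exp\bigl(\frac{C_1 v^2}{\kappa-1}\bigr)$ sits. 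Your sketch collapses these two steps into one and thereby misidentifies the source of the double exponential.
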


\begin{remark}
Compared to \cite{Denisov09}, we have switched the order of $\begin{pmatrix}  \kappa_n & 0 \\ 0 &  \kappa_n^{-1} \end{pmatrix}$ and $I+W_n$ in \eqref{6.3}; this is better suited to our use. This can be proved with minimal modifications to the proof in \cite{Denisov09}. Alternatively, by inserting an additional $I+W_N = I$ and  $\kappa_{-1} = \frac{C+\kappa}2$, \eqref{6.3} can be rewritten as
\[
\Psi_{n} = \kappa_{-1}^{-1} (I+W_n) \begin{pmatrix} \kappa_{n-1} & 0 \\ 0 &  \kappa_{n-1}^{-1} \end{pmatrix}   \dots (I+W_0) \begin{pmatrix} \kappa_{-1} & 0 \\ 0 &  \kappa_{-1}^{-1} \end{pmatrix} \begin{pmatrix} 1 \\ 0 \end{pmatrix},
\]
in which we can group $I+W_j$ with $\begin{pmatrix} \kappa_{j-1} & 0 \\ 0 &  \kappa_{j-1}^{-1} \end{pmatrix}$ and apply the  version stated in \cite{Denisov09}. 
\end{remark}

In order to apply Theorem~\ref{T6.1} to \eqref{6.2}, we now verify that conditions of Theorem~\ref{T6.1} are met. Our $\kappa_n = \lambda_n^{-1}$, so \eqref{6.4} follows from \eqref{4.1}. From \eqref{4.3} and \eqref{4.4} we compute
\begin{align*}
1+ e_n & =  \frac {c_n(\lambda_{n+1} - d_{n+1}) + c_{n+1} (d_n - \lambda^{-1}_{n})}{(\lambda_{n} - \lambda^{-1}_{n}) c_n}, \\
1+ h_n & = \frac {-c_n(\lambda^{-1}_{n+1} - d_{n+1}) + c_{n+1}(\lambda_{n} - d_n)} {(\lambda_{n} - \lambda^{-1}_{n}) c_n}.
\end{align*}
Then
\begin{equation}\label{6.5}
\left\lvert \log \prod_{n=k}^l \lvert 1 + e_n \rvert \right\rvert \le C + C(1 - \lvert z \rvert)\sqrt{l-k}
\end{equation}
(and the same inequality with $h_n$ instead of $e_n$) is proved almost as in the proof of Theorem 2.2 of \cite{Denisov09}; a modification is needed where \cite{Denisov09} uses convergence of coefficients, so Lemma~2.5 of \cite{Denisov09} must be replaced by
\begin{lemma}\label{L6.2}
If $\{\epsilon_n\}_{n=0}^\infty$ is a sequence of complex numbers and $C<\infty$ a constant such that for all $n$,
\begin{equation}\label{6.6}
C^{-1} \le \Im \epsilon_n \le  \lvert \epsilon_n \rvert \le C,
\end{equation}
and
\begin{equation}\label{6.7}
\sum_n \lvert \epsilon_{n+1} - \epsilon_n \rvert^2 \le C,
\end{equation}
then there is a constant $C_1<\infty$ which depends only on $C$, such that for all $k\le l$,
\[
\left\lvert \sum_{n=k}^l \frac{\epsilon_{n+1} - \epsilon_n}{\epsilon_n} \right\rvert < C_1.
\]
\end{lemma}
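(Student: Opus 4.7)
The strategy is to rewrite $z_n := (\epsilon_{n+1}-\epsilon_n)/\epsilon_n$ as the principal logarithm $\log(1+z_n) = \log(\epsilon_{n+1}/\epsilon_n)$ up to a quadratic error, so that
$$\sum_{n=k}^{l} z_n = \sum_{n=k}^{l} \log(1+z_n) + \sum_{n=k}^{l}\bigl(z_n - \log(1+z_n)\bigr).$$
The first sum should be made to telescope by fixing a consistent branch of $\log$, and the second should be absolutely summable via a Taylor estimate together with \eqref{6.7}.

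To set up the telescoping, I would first observe that \eqref{6.6} confines each $\epsilon_n$ to the compact set $K = \{w \in \mathbb{C} : C^{-1}\le \Im w,\ |w|\le C\}$, which lies inside the open upper half plane. Setting $L_n := \log \epsilon_n$ with the principal branch, each $L_n$ lies in a compact subset of $\mathbb{C}$ depending only on $C$, so $|L_n|\le B(C)$. Moreover, the ratio $\epsilon_{n+1}/\epsilon_n$ has modulus in $[C^{-2},C^2]$ and argument $\arg\epsilon_{n+1}-\arg\epsilon_n$, which is a difference of two numbers in a fixed compact subinterval of $(0,\pi)$; hence the ratio stays in a compact set $K' \subset \mathbb{C}\setminus(-\infty,0]$. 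On $K'$ the principal branch is well defined and single valued, so $\log(1+z_n) = L_{n+1}-L_n$ holds with no branch jumps, and the first sum telescopes to $L_{l+1}-L_k$, bounded by $2B$ uniformly in $k$ and $l$.

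For the remainder term, I would use that $g(z) := z - \log(1+z)$ is analytic at $0$ with $g(z) = z^2/2 + O(z^3)$, so $|g(z)|/|z|^2$ extends continuously to the origin. Since $1+z_n \in K'$ confines $z_n$ to a compact set, $|g(z_n)|/|z_n|^2$ is bounded by some $C' = C'(C)$. Combined with $|z_n| \le C|\epsilon_{n+1}-\epsilon_n|$ (using $|\epsilon_n|\ge \Im\epsilon_n \ge C^{-1}$) and \eqref{6.7}, this yields
$$\sum_{n=k}^{l} |z_n - \log(1+z_n)| \le C' C^2 \sum_{n=k}^{l} |\epsilon_{n+1}-\epsilon_n|^2 \le C'C^3,$$
and combining the two estimates gives the claim with $C_1 = 2B + C'C^3$.

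The only genuinely delicate point is verifying that a single branch of $\log$ can be applied consistently along the entire sequence so that $\sum \log(\epsilon_{n+1}/\epsilon_n) = L_{l+1} - L_k$ truly holds without hidden $2\pi i$ defects; the lower bound $\Im \epsilon_n \ge C^{-1}$ in \eqref{6.6} is tailored precisely to keep $\arg\epsilon_n$ away from $0$ and $\pi$ and so to prevent the ratios $\epsilon_{n+1}/\epsilon_n$ from ever crossing the branch cut. Once this geometric observation is in place, the rest is a standard quantitative Taylor estimate paired with the $\ell^2$-summability \eqref{6.7}.
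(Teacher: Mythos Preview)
Your proposal is correct and follows essentially the same route as the paper's proof: both arguments pin the ratios $\epsilon_{n+1}/\epsilon_n$ to a compact subset of $\mathbb{C}\setminus(-\infty,0]$ using the lower bound on $\Im\epsilon_n$, telescope the principal logarithms via $\arg(\epsilon_{n+1}/\epsilon_n)=\arg\epsilon_{n+1}-\arg\epsilon_n$, and control the difference between $z_n$ and $\log(1+z_n)$ by a quadratic bound coming from analyticity of $(z-1-\log z)/(z-1)^2$ on that compact set together with \eqref{6.7}. The only cosmetic difference is that the paper works with the function $(z-1-\log z)/(z-1)^2$ evaluated at $z=\epsilon_{n+1}/\epsilon_n$, whereas you write the same object as $(z-\log(1+z))/z^2$ at $z=z_n$.
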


\begin{proof}
Let us fix branches of $\log$ and $\arg$ on $\mathbb{C} \setminus (-\infty,0]$ with
\[
\Im \log = \arg \in (-\pi,\pi).
\]
The assumptions of the lemma imply that $\frac{\epsilon_{n+1}}{\epsilon_n} \in S$ for all $n$, where
\[
S = \{ z \in \mathbb{C} \mid C^{-2} \le \lvert z \rvert \le C^2,   \lvert \arg z \rvert \le \pi -  2 \arcsin(C^{-2}) \}.
\]
Compactness of $S\subset \mathbb{C} \setminus (-\infty,0]$  and analyticity of $\frac{z - 1 - \log z}{(z-1)^2}$ in $\mathbb{C} \setminus (-\infty,0]$ imply that for some $C_2<\infty$ and all $z\in S$,
\[
\lvert z -1 - \log z \rvert \le C_2 \lvert z-1 \rvert^2.
\]
Applying this to $z = \frac{\epsilon_{n+1}}{\epsilon_n}$ and summing in $n$, we conclude
\begin{equation}\label{6.8}
\left \lvert \sum_{n=k}^l \left( \frac{\epsilon_{n+1} - \epsilon_n}{\epsilon_n} -  \log \frac{\epsilon_{n+1}}{\epsilon_n} \right) \right\rvert  \le C_2 \sum_{n=k}^l \left\lvert \frac{\epsilon_{n+1} - \epsilon_n}{\epsilon_n}  \right\rvert^2.
\end{equation}
Since $\Im \epsilon_n > 0$ for all $n$, with our choice of branches we have
\[
\arg \frac{\epsilon_m}{\epsilon_n} = \arg \epsilon_m - \arg \epsilon_n
\]
for any $m,n$, and so
\[
\sum_{n=k}^l \log \frac{\epsilon_{n+1}}{\epsilon_n} = \log \frac{\epsilon_{l+1}}{\epsilon_k}.
\]
Thus, \eqref{6.8} and $\lvert \epsilon_n \rvert \ge C^{-1}$ implies
\[
\left \lvert \sum_{n=k}^l  \frac{\epsilon_{n+1} - \epsilon_n}{\epsilon_n}  -  \log \frac{\epsilon_{l+1}}{\epsilon_k} \right\rvert  \le  \frac{C_2}{C^2} \sum_{n=0}^\infty \left\lvert \epsilon_{n+1} - \epsilon_n \right\rvert^2. 
\]
The proof is completed by noting that $ \left\lvert \log \frac{\epsilon_{l+1}}{\epsilon_k} \right\rvert$ is uniformly bounded in $k, l$ by \eqref{6.6} and using \eqref{6.7}.
\end{proof}

Following \cite{Denisov09}, Lemma~\ref{L6.2} is applied to $\epsilon_n = t c_n$ and $\epsilon_n = s( \lambda_n - \lambda_n^{-1})$. They obey all the conditions by \eqref{3.7}, \eqref{4.1}, \eqref{4.2}, and \eqref{3.2}.

Thus, Theorem~\ref{T6.1} is applicable to \eqref{6.2} with $\kappa = 1 + C (1-\lvert z\rvert)$ and $v = 1-\lvert z\rvert$, and we conclude that
\begin{equation}\label{6.9}
U_0^{-1}(z) u_0^N(z) = \prod_{n=1}^N (\lambda_n^{-1}(z) (1+e_n(z))) \begin{pmatrix} \phi_N(z) \\ \nu_N(z) \end{pmatrix}
\end{equation}
with $\phi_N$, $\nu_N$ which obey, since $v^2/(\kappa-1)$ is uniformly bounded for $z\in \Omega$,
\begin{equation}\label{6.10}
\lvert \phi_N \rvert, \lvert \nu_N \rvert \le \exp\left( \frac{C}{1 - \lvert z\rvert} \right) 
\end{equation}
for some $C<\infty$ and all $N$ and $z\in \Omega$. Moreover, if $\delta$ in \eqref{6.1} has been chosen small enough, then
\begin{equation}\label{6.11}
\lvert \phi_N \rvert > C, \qquad \lvert \nu_N \rvert < \frac C2, \qquad \text{for $z\in\Omega$ with $1-\lvert z \rvert> \frac \epsilon 2$}.
\end{equation}

Multiplying \eqref{6.9} by $U_0(z)$ and using \eqref{4.3}, we see
\begin{equation}\label{6.12}
(u_0^N)_2(z) = \prod_{n=1}^N \left(\lambda_n^{-1}(z) (1+e_n(z))\right) c_0(z) (\phi_N(z) + \nu_N(z) )
\end{equation}
which we rewrite as
\begin{equation}\label{6.13}
-\log \lvert (u_0^N)_2(z)\rvert = - \log \prod_{n=1}^N  \left \lvert \lambda_n^{-1}(z) (1+e_n(z)) \right\rvert - \log \lvert c_0(z) \rvert  + f_N(z)
\end{equation}
where
\[
f_N(z) =  - \log \left\lvert \phi_N(z) + \nu_N(z) \right\rvert.
\]

\begin{lemma} \label{L6.3}
The function $f_N(z)$ is continuous on $\Omega$ and harmonic on $\Int \Omega$. There is a value of $C\in (0,\infty)$, independent of $N\in \mathbb{N}_0$, such that
\begin{enumerate}[(i)]
\item for all $z\in I$ and $N\in \mathbb{N}_0$,
\begin{equation}\label{6.14}
\left\lvert  \log  w^N(z)   - 2 f_N(z)  \right\rvert  \le C
\end{equation}
\item for all $N\in \mathbb{N}_0$,
\begin{equation}\label{6.15}
\int_I f_N^+(z) \frac{d\theta}{2\pi} \le C
\end{equation}
\item for all $z\in \Omega\setminus I$ and $N\in \mathbb{N}_0$,
\begin{equation}\label{6.16}
f_N(z) \ge - \frac C{1 - \lvert z \rvert}
\end{equation}
\item for all $z\in \Omega$ with $1-\lvert z \rvert > \tfrac 12 \epsilon$ (this is $\epsilon$ from \eqref{3.8}) and $N\in \mathbb{N}_0$,
\begin{equation}\label{6.17}
f_N(z) \le C.
\end{equation}
\end{enumerate}
\end{lemma}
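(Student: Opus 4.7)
My plan is organized around the identity \eqref{6.12}, rewritten as
\[
\phi_N(z)+\nu_N(z) = \frac{(u_0^N)_2(z)}{c_0(z)\,\prod_{n=1}^N\lambda_n^{-1}(z)\,(1+e_n(z))}.
\]
The first thing to establish is continuity of $f_N$ on $\Omega$ and harmonicity on $\Int\Omega$; this reduces to showing that $\phi_N+\nu_N$ is analytic and nowhere-zero on $\Omega$. The denominator factors are all analytic and nonvanishing on $\Omega$: $c_0$ by the lower bound in \eqref{3.7}; each $\lambda_n=\lambda_\pm(\Delta_n)$ because \eqref{3.5} together with Lipschitz continuity of $\Delta_n$ keeps $4-\Delta_n^2$ away from the branch cut $(-\infty,0]$ of $\sqrt{\cdot}$ (after shrinking $\epsilon$ if necessary), while $\lambda_\pm$ never vanishes since $\lambda_+\lambda_-=1$; each $1+e_n$ because \eqref{6.5} forces $\prod|1+e_n|>0$. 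For the numerator, $(u_0^N)_2$ is analytic on $\Omega$ and I claim nonvanishing there: on $I$ this is Lemma~\ref{L5.1}, and for $|z|<1$ the proof of Lemma~\ref{L5.1} exhibits $u_0^N(z)$ as a scalar multiple of $\binom{F^N(z)}{1}$, so $(u_0^N)_2$ equals the scalar; but $u_0^N = \tilde\Phi_0^{-1}\cdots\tilde\Phi_{N-1}^{-1}u_N^N$ is a product of determinant-one matrices applied to $u_N^N=\binom{\lambda_N-d_N}{c_N}\ne 0$ (using $c_N\ne0$ by \eqref{3.7}), hence nowhere zero. This gives the stated regularity.

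For (i), take $z\in I$, so $|z|=1$ and $\Delta_n(z)\in\mathbb{R}$; in particular $|\lambda_n(z)|=1$ and $\log\prod|\lambda_n^{-1}|=0$. Combining \eqref{5.3} with \eqref{6.13},
\[
\log w^N(z) - 2f_N(z) = \log\bigl(-ic_N(z)\Im\lambda_N(z)\bigr) - 2\log\prod_{n=1}^N|1+e_n(z)| - 2\log|c_0(z)|,
\]
and each term on the right is bounded uniformly in $N$ and $z\in I$: the first by \eqref{3.7} and \eqref{4.1}, the second by \eqref{6.5} with $1-|z|=0$ (so the $v\sqrt{l-k}$ correction vanishes), the third by \eqref{3.7}. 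For (ii), (i) gives $f_N^+\leq\tfrac12(\log w^N)^+ + C$, and $(\log x)^+\leq x$ together with $\int_{\partial\mathbb{D}}w^N\,d\theta/(2\pi)\leq\|\mu^N\|=1$ yields \eqref{6.15}.

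Parts (iii) and (iv) are immediate from Theorem~\ref{T6.1}. For \eqref{6.16}, the triangle inequality and \eqref{6.10} give $|\phi_N+\nu_N|\leq 2\exp(C/(1-|z|))$, so $f_N\geq -\log 2 - C/(1-|z|)$, which absorbs into $-C'/(1-|z|)$ since $1-|z|$ is bounded on $\Omega$. For \eqref{6.17}, when $1-|z|>\epsilon/2$, \eqref{6.11} gives $|\phi_N+\nu_N|\geq|\phi_N|-|\nu_N|\geq C/2$, whence $f_N\leq\log(2/C)$.

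The main technical point is the nonvanishing claim for $\phi_N+\nu_N$ in the regularity step, which is essential so that $f_N$ is genuinely continuous (rather than merely upper-semicontinuous with possible $+\infty$ values); the remaining items are bookkeeping given the estimates assembled in Sections~\ref{S3}--\ref{S4} and Theorem~\ref{T6.1}.
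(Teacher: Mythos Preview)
Your proof is correct and follows essentially the same approach as the paper's: the regularity step via nonvanishing of $(u_0^N)_2$ on $\Omega$ (using Lemma~\ref{L5.1} on $I$ and the Weyl-solution identification for $|z|<1$), part (i) from \eqref{5.3} and \eqref{6.13} with the uniform bounds \eqref{3.7}, \eqref{4.1}, \eqref{6.5}, part (ii) from (i) plus $(\log x)^+\le x$ and $\mu^N(\partial\mathbb{D})=1$, and parts (iii)--(iv) directly from \eqref{6.10}--\eqref{6.11}. Your treatment of the regularity step is in fact more explicit than the paper's terse citation of \eqref{6.12} and Lemma~\ref{L5.1}; note also that your parenthetical ``after shrinking $\epsilon$ if necessary'' is unnecessary, since $|\Delta_m|\le 2-C$ already forces $4-\Delta_m^2\notin(-\infty,0]$.
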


\begin{proof}
For $z\in\Omega$, $\phi_N(z)+\nu_N(z) \neq 0$ by \eqref{6.12} and Lemma~\ref{L5.1}. Moreover, $\phi_N(z) + \nu_N(z)$ are analytic in $z$ by \eqref{6.9}, so the same is true of $-\log(\phi_N(z) + \nu_N(z))$. Since  $f_N(z) = - \Re \log (\phi_N(z)+ \nu_N(z))$, the conclusions hold.

For $z\in I$, using $\lvert \lambda_n(z) \rvert=1$ and  \eqref{6.13}, we can rewrite \eqref{5.3} as
\[
\log \lvert w^N(z) \rvert  = \log \lvert c_N(z)\rvert + \log \lvert \Im\lambda_N(z)\rvert  - 2 \log \prod_{n=1}^N  \left \lvert 1+e_n(z) \right\rvert - 2 \log \lvert c_0(z) \rvert + 2 f_N(z)
\]

For $z\in I$, $\prod_{n=1}^N \lvert 1+e_n(z) \rvert$ is uniformly bounded by \eqref{6.5} and $\log\lvert c_0(z)\rvert$, $\log\lvert c_N(z)\rvert$  by \eqref{3.7}, which proves \eqref{6.14}.

Using $\log  w^N(z) \le w^N(z) -1$ and the fact that $w^N$ is the a.c.\ part of a probability measure,
\[
\int (\log  w^N(z))^+ \frac{d\theta}{2\pi} \le \int_I w^N(z) \frac{d\theta}{2\pi} \le \mu^N(I) \le 1.
\]
With \eqref{6.14}, this implies \eqref{6.15}.

\eqref{6.16} follows from \eqref{6.10}, and \eqref{6.17} follows from \eqref{6.11} and
\[
\left\lvert \phi_N(z) + \nu_N(z) \right\rvert \ge  \lvert \phi_N(z) \rvert - \lvert \nu_N(z) \rvert. \qedhere
\]
\end{proof}

\begin{lemma}[{\cite{Denisov09,KaluzhnyShamis12}}] \label{L6.4}
Assume that $f(z)$ is continuous on $\Omega$, harmonic on $\Int \Omega$, and for some $C, \alpha >0$,
\[
\int_I f^+(e^{i\theta}) \frac{d\theta}{2\pi} < C,
\]
$f(z) > - C (1-\lvert z\rvert)^{-\beta}$ for $z\in \Int\Omega$, and $f(z)<C$ for $z\in\Omega$ with $1-\lvert z\rvert > \frac{C}{1+\beta}$. Then there is a constant $B$, depending only on $C,\beta$, so that
\[
\int_I f^-(e^{i\theta}) \frac{d\theta}{2\pi} <B.
\]
\end{lemma}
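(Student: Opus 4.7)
The plan is to prove this by the classical harmonic-measure representation on the curved region $\Omega$, together with a careful estimate of the contributions near the two corners where $\partial\Omega$ meets $\partial\mathbb{D}$. The heuristic is that the integrated upper bound on $I$, combined with the pointwise upper bound on the ``middle'' of $\Omega$, should force the boundary values of $f$ to be integrable from below.

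First I would fix an interior point $z_0 \in \Int\Omega$ lying, say, on the radial through the midpoint of $I$ with $1-|z_0|$ slightly larger than $C/(1+\beta)$. Assumptions (b) and (c) give $|f(z_0)| \leq C'$ for a constant $C'$ depending only on $C$ and $\beta$. Since $f$ is continuous on $\Omega$ and harmonic on $\Int\Omega$, the Dirichlet representation gives
\[
f(z_0) = \int_{\partial\Omega} f(\zeta)\,d\omega_{z_0}(\zeta),
\]
where $\omega_{z_0}$ is harmonic measure on $\partial\Omega$. Splitting $f = f^+ - f^-$ and isolating the contribution of $I$,
\[
\int_I f^- \, d\omega_{z_0} = \int_I f^+ \, d\omega_{z_0} - f(z_0) + \int_{\partial\Omega\setminus I} f \, d\omega_{z_0}.
\]
Because $z_0$ has bounded positive distance from $I$, the density of $d\omega_{z_0}\!\!\restriction_I$ with respect to $d\theta$ is bounded above and below by positive constants depending only on $\epsilon$ and $|I|$. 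In particular, bounding $\int_I f^- \, d\omega_{z_0}$ above will suffice to bound $\int_I f^-\,d\theta$.

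The first two terms on the right are immediately under control: the first by hypothesis (a) and the comparability of measures, the second by $|f(z_0)| \leq C'$. The main work is therefore to bound $\int_{\partial\Omega\setminus I} f \, d\omega_{z_0}$ from above. I would split $\partial\Omega\setminus I$ into the inner arc at $|z| = 1-\epsilon$ and the two radial sides, and further split each radial side into a bulk part where $1-|z| > C/(1+\beta)$ (on which $f \leq C$ by hypothesis (c), giving a uniformly bounded contribution) and a ``tip'' near the corner at $\partial\mathbb{D}$. At each corner the interior angle of $\Omega$ is $\pi/2$, so the density of $d\omega_{z_0}$ along the radial side decays linearly in distance to the corner; together with the growth bound $f \geq -C(1-|z|)^{-\beta}$ from hypothesis (b), the integral against $d\omega_{z_0}$ on each tip is dominated by a convergent integral of the form $\int_0^\delta t^{-\beta}\cdot t\,dt$ (finite for $\beta<2$, and in particular for $\beta=1$ as needed in the applications via \eqref{6.16}).

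The main obstacle is that the integrand $f$ on the tips of the radial sides can \emph{a priori} be large positive as well as negative, whereas the hypotheses give only the lower bound $f \geq -C(1-|z|)^{-\beta}$, not a matching upper bound. To bound $f^+$ on the tips, I would use subharmonicity of $f^+$ and a Phragm\'en--Lindel\"of / harmonic-majorant argument on a small quarter-disk neighborhood of each corner: construct an explicit harmonic majorant of $f^+$ on such a neighborhood using the integrated bound $\int_I f^+\,d\theta < C$ on the arc part of the neighborhood's boundary and the pointwise bound $f \leq C$ on the remaining part, then invoke that the interior angle is $\pi/2$ so that the resulting majorant remains integrable against $d\omega_{z_0}$ on the tip. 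Combining these corner estimates with the already-bounded bulk contributions gives the desired uniform upper bound on $\int_I f^-\,d\theta$ depending only on $C$ and $\beta$.
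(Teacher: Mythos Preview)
The paper does not actually prove this lemma; it merely observes that the half-plane version is \cite[Lemma~2]{KaluzhnyShamis12} (building on \cite{Denisov09}) and that a conformal map from the disk to the upper half plane transfers it to the arc setting. So your direct harmonic-measure argument is genuinely different in that you are trying to reprove the cited result from scratch.

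Your strategy is the right one in spirit, but there is a concrete gap. You assert that ``the density of $d\omega_{z_0}\!\!\restriction_I$ with respect to $d\theta$ is bounded above and below by positive constants depending only on $\epsilon$ and $|I|$.'' This is false: at each endpoint of $I$ the boundary of $\Omega$ has an interior right angle, and a local conformal map $w\mapsto w^2$ to a half-plane shows that the harmonic-measure density on $I$ vanishes \emph{linearly} at the corners, regardless of the basepoint $z_0\in\Int\Omega$. Hence a bound on $\int_I f^-\,d\omega_{z_0}$ does not by itself yield a bound on $\int_I f^-\,d\theta$; you lose control of $f^-$ on $I$ near the two endpoints, exactly where the hypotheses give you nothing. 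The same corner geometry that you invoke to make the radial-tip contribution integrable is what breaks the comparability you need on $I$.

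This is repairable, but not by the argument you wrote. One route is the paper's: conformally map $\Omega$ to a half-plane rectangle and quote the Kaluzhny--Shamis lemma, whose proof already contains the delicate corner analysis. If you want a self-contained argument, you will need either a barrier/comparison function tailored to the corner (so that the corner contribution to $\int_I f^-\,d\theta$ is controlled directly), or to run the harmonic-measure estimate on a slightly larger arc $I'\supset I$ on which the original hypotheses still hold (as is available in the application via continuity of $L$), so that $I$ sits away from the corners of the enlarged domain and the density really is bounded below on $I$. Your Phragm\'en--Lindel\"of sketch for $f^+$ on the radial tips is plausible but would also need to be made precise; as written it assumes pointwise control on parts of the quarter-disk boundary where none is given.
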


In the given references, this is a lemma on a interval $I$ on $\mathbb{R}$, rather than an arc on $\partial\mathbb{D}$. Using a conformal map which maps the unit disk to the upper half plane, the statement given here is an immediate corollary of \cite[Lemma 2]{KaluzhnyShamis12}.

By Lemma~\ref{L6.3}, Lemma~\ref{L6.4} is applicable to $f_N(z)$, and proves
\[
\int_I f_N(e^{i\theta}) \frac{d\theta}{2\pi} < C
\]
with a constant $C$ independent of $N$. By \eqref{6.15} and \eqref{6.14}, this implies
\[
\int_I \log w^{N}(e^{i\theta}) \frac{d\theta}{2\pi} > C
\]
with $C\in \mathbb{R}$ independent of $N$.

This integral is a relative entropy: in the notation of \cite[Section 2.2]{Rice}, with $\chi_I$ the characteristic function of $I$,
\[
\int_I \log w^{N}(e^{i\theta}) \frac{d\theta}{2\pi} = S \left( \chi_I \frac{d\theta}{2\pi} \big\vert \chi_I d\mu^N \right).
\]
Since $\alpha^N$ converge pointwise to $\alpha$, the measures $\mu^N$ converge weakly to $\mu$, so upper semicontinuity of entropy \cite[Theorem 2.2.3]{Rice} gives
\[
\int_I \log w(e^{i\theta}) \frac{d\theta}{2\pi} \ge \limsup_{N\to\infty}  \int_I \log w^{N}(e^{i\theta}) \frac{d\theta}{2\pi} \ge C > -\infty
\]
which proves \eqref{1.7}.

\eqref{1.7} implies that  for a.e. $e^{i\theta} \in I$, $\log w(e^{i\theta}) > -\infty$, and thus $w(e^{i\theta}) > 0$. This implies that $I \subset \esssupp w$. Since $L$ is continuous, for every $z\in \partial\mathbb{D}$ we may find a suitable arc $I$ which contains it, so $z \in I \subset  \Sigma_\ac(\mu)$. This proves the first inclusion in \eqref{1.14b}.

By the Last--Simon \cite{LastSimon99} theorem for a.c.\ spectrum of right limits (extended to OPUC by Simon \cite[Theorem 10.9.11]{OPUC2}), for any right limit $\alpha^{(r)}$,
\[
\Sigma_\ac(\mu)  \subset \{ z \in \partial\mathbb{D} \mid \lvert \Delta^{(r)}(z) \rvert \le 2\}.
\]
By \eqref{1.12}, taking the intersection over all right limits proves the second inclusion in \eqref{1.14b} and completes the proof of Theorem~\ref{T1.4}.

\section{Comparing the lower and upper bounds on $\Sigma_\ac(\mu)$}\label{S7}

Theorem~\ref{T1.4} gives lower and upper bounds on $\Sigma_\ac(\mu)$. In this section, we explore cases in which the lower and upper bounds coincide.

Equality of the sets in \eqref{1.14} follows from a mild condition:

\begin{lemma}\label{L7.1}
If \eqref{1.5} holds and $\{ z \in \partial\mathbb{D} \mid L(z) = 2\}$ has zero Lebesgue measure, then all sets in \eqref{1.14} are equal.
\end{lemma}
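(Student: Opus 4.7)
The plan is to show that the two outer sets in \eqref{1.14} already coincide under the hypothesis, which will force the middle set to agree with them as well. Since the chain
\[
\overline{\{z\in\partial\mathbb{D}\mid L(z)<2\}} \subset \supp(w\,dx) \subset \overline{\{z\in\partial\mathbb{D}\mid L(z)\le 2\}}^{\ess}
\]
is given by \eqref{1.14}, it suffices to exhibit an inclusion of the rightmost set into the leftmost.

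The first step is to observe that the essential closure of a subset of $\partial\mathbb{D}$ depends on the set only modulo Lebesgue null sets: by definition, whether every neighborhood of $e^{i\theta}$ meets the set in positive Lebesgue measure is unaffected by modifying the set on a null set. Writing $\{L\le 2\} = \{L<2\}\cup\{L=2\}$ and invoking the hypothesis that $\{L=2\}$ has zero Lebesgue measure yields
\[
\overline{\{z\in\partial\mathbb{D}\mid L(z)\le 2\}}^{\ess} = \overline{\{z\in\partial\mathbb{D}\mid L(z)<2\}}^{\ess}.
\]

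The second step is the trivial inclusion of essential closure into topological closure: if every arc around $e^{i\theta}$ meets a set $B$ in positive Lebesgue measure then in particular every such arc meets $B$, so $e^{i\theta}\in\overline{B}$. Applied to $B=\{L<2\}$ this gives
\[
\overline{\{z\in\partial\mathbb{D}\mid L(z)<2\}}^{\ess} \subset \overline{\{z\in\partial\mathbb{D}\mid L(z)<2\}}.
\]
Concatenating the two displays with \eqref{1.14} collapses the whole chain to a string of equalities, finishing the proof. There is no substantive obstacle: the argument is a one-line unpacking of the definition of $\ol{\,\cdot\,}^{\ess}$ combined with the null-set hypothesis.
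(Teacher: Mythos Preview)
Your proof is correct and follows essentially the same approach as the paper: both arguments reduce to the null-set invariance of essential closure, i.e.\ $\overline{\{L\le 2\}}^{\ess}=\overline{\{L<2\}}^{\ess}$ since $\{L=2\}$ is null. The only cosmetic difference is that the paper invokes continuity of $L$ (Lemma~\ref{L3.2}) to conclude that $\{L<2\}$ is open and hence $\overline{\{L<2\}}=\overline{\{L<2\}}^{\ess}$, whereas you use the weaker general inclusion $\overline{B}^{\ess}\subset\overline{B}$; your version is marginally more economical since it does not need continuity of $L$ at this step.
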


\begin{proof}
By Lemma~\ref{L3.2}, $L(z)$ is continuous, so the set $Y = \{ z \in \partial\mathbb{D} \mid L(z) < 2\}$ is open and $\overline{Y} = \overline{Y}^\ess$.  If the set $X = \{ z \in \partial\mathbb{D} \mid L(z) = 2\}$ has Lebesgue measure $0$, then $\overline{Y}^\ess = \overline{X\cup Y}^\ess$. Thus, $\overline Y = \overline{X \cup Y}^\ess$ and equality of the sets in \eqref{1.14} follows. 
\end{proof}

In all our applications, $\{ z \in \partial\mathbb{D} \mid L(z) = 2\}$ will be a finite set.

\begin{proof}[Proof of Theorem~\ref{T1.1}] 
A straightforward calculation  together with \eqref{3.4} gives
\[
\lvert \Delta_m(e^{i\theta}) \rvert = \frac {2 \lvert \cos(\theta/2) \rvert}{\rho_m}, \qquad
L(e^{i\theta}) = \frac{2 \lvert \cos(\theta/2)\rvert}{\sqrt{1 - A^2 }},
\]
so $L(e^{i\theta})=2$ is equivalent to $\lvert \cos(\theta/2)\rvert = \sqrt{1-A^2}$. This holds only on a finite set, so Lemma~\ref{L7.1} implies equality of all sets in \eqref{1.14}.

Moreover, $L(e^{i\theta})<2$ is equivalent to $\lvert \cos(\theta/2) \rvert < \sqrt{1-A^2}$  and to
\[
2 \arcsin A < \theta < 2 \pi - 2\arcsin A,
\]
so the claim follows from \eqref{1.14} and Theorem~\ref{T1.4}.
\end{proof}

\begin{proof}[Proof of Theorem~\ref{T1.2}]
A straightforward calculation together with \eqref{3.4} gives
\[
\Delta_m(e^{i\theta}) = 2 \frac{\cos \theta + \Re(\alpha_{2m} \bar \alpha_{2m+1})}{ \rho_{2m}\rho_{2m+1} } .
\]
Using uniform boundedness of $\rho_{2m}\rho_{2m+1}$ given by
\[
0 < 1 - (\sup_n \lvert \alpha_n\rvert)^2 \le  \rho_{2m}\rho_{2m+1} \le 1,
\]
it is then easy to see that $L(e^{i\theta})<2$ is equivalent to $- A_+ < \cos \theta < A_-$ and $L(e^{i\theta})=2$ equivalent to $\cos \theta \in \{- A_+ , A_-\}$. Thus, the set of $e^{i\theta}$ such that $L(e^{i\theta})=2$ is finite, so Lemma~\ref{L7.1} implies equality of all sets in \eqref{1.14}. If $I \subset \Int( \esssupp w \setminus \{-1,1\})$, it is clear from the above that $L(z) < 2$ for $z \in I$, so \eqref{1.7} follows by Theorem~\ref{T1.4}.
\end{proof}

\begin{proof}[Proof of Theorem~\ref{T1.3}]
Since all right limits have the same discriminant $\Delta_e(z)$,
\[
L(z) = \lvert \Delta_e(z)\rvert.
\]
Since $\Delta_e(z)$ is a nontrivial polynomial in $z^{1/2}$, the set $\{ e^{i\theta} \in\partial\mathbb{D} \mid \Delta_e(e^{i\theta})\in \{-2,2\}\}$ is finite, so Lemma~\ref{L7.1} implies \eqref{1.11}. \eqref{1.7} follows from Theorem~\ref{T1.4}. 
\end{proof}

\bibliographystyle{amsplain}

\begin{thebibliography}{10}

\bibitem{BreuerLastSimon10}
Jonathan Breuer, Yoram Last, and Barry Simon, \emph{The {N}evai condition},
  Constr. Approx. \textbf{32} (2010), no.~2, 221--254.

\bibitem{CMV}
Maria~J. Cantero, Leandro Moral, and Luis Vel{\'a}zquez, \emph{Five-diagonal
  matrices and zeros of orthogonal polynomials on the unit circle}, Linear
  Algebra Appl. \textbf{362} (2003), 29--56.

\bibitem{DamanikKillipSimon10}
David Damanik, Rowan Killip, and Barry Simon, \emph{Perturbations of orthogonal
  polynomials with periodic recursion coefficients}, Ann. of Math. (2)
  \textbf{171} (2010), no.~3, 1931--2010. \MR{2680401 (2011e:47058)}

\bibitem{Denisov02}
Sergey~A. Denisov, \emph{On the existence of the absolutely continuous
  component for the measure associated with some orthogonal systems}, Comm.
  Math. Phys. \textbf{226} (2002), no.~1, 205--220. \MR{1889998 (2003e:34156)}

\bibitem{Denisov09}
\bysame, \emph{On a conjecture by {Y}. {L}ast}, J. Approx. Theory \textbf{158}
  (2009), no.~2, 194--213. \MR{2519069 (2010e:47059)}

\bibitem{GesztesyMakarovZinchenko08}
Fritz Gesztesy, Konstantin~A. Makarov, and Maxim Zinchenko, \emph{Essential
  closures and {AC} spectra for reflectionless {CMV}, {J}acobi, and
  {S}chr\"odinger operators revisited}, Acta Appl. Math. \textbf{103} (2008),
  no.~3, 315--339. \MR{2430447 (2010b:47118)}

\bibitem{GolinskiiNevai01}
Leonid Golinskii and Paul Nevai, \emph{Szeg{\H o} difference equations,
  transfer matrices and orthogonal polynomials on the unit circle}, Comm. Math.
  Phys. \textbf{223} (2001), no.~2, 223--259. \MR{1864433 (2002k:42051)}

\bibitem{GolinskiiZlatos07}
Leonid Golinskii and Andrej Zlato{\v{s}}, \emph{Coefficients of orthogonal
  polynomials on the unit circle and higher-order {S}zeg{\H o} theorems},
  Constr. Approx. \textbf{26} (2007), no.~3, 361--382. \MR{2335688
  (2008k:42080)}

\bibitem{KaluzhnyShamis12}
U.~Kaluzhny and M.~Shamis, \emph{Preservation of absolutely continuous spectrum
  of periodic {J}acobi operators under perturbations of square-summable
  variation}, Constr. Approx. \textbf{35} (2012), no.~1, 89--105. \MR{2864370}

\bibitem{Kupin05}
S.~Kupin, \emph{Spectral properties of {J}acobi matrices and sum rules of
  special form}, J. Funct. Anal. \textbf{227} (2005), no.~1, 1--29. \MR{2165086
  (2006h:47053)}

\bibitem{LaptevNabokoSafronov03}
A.~Laptev, S.~Naboko, and O.~Safronov, \emph{On new relations between spectral
  properties of {J}acobi matrices and their coefficients}, Comm. Math. Phys.
  \textbf{241} (2003), no.~1, 91--110. \MR{2013753 (2004g:47041)}

\bibitem{Last07}
Yoram Last, \emph{Destruction of absolutely continuous spectrum by perturbation
  potentials of bounded variation}, Comm. Math. Phys. \textbf{274} (2007),
  no.~1, 243--252. \MR{2318855 (2008k:47080)}

\bibitem{LastLukic}
Yoram Last and Milivoje Lukic, {i}n preparation.

\bibitem{LastSimon99}
Yoram Last and Barry Simon, \emph{Eigenfunctions, transfer matrices, and
  absolutely continuous spectrum of one-dimensional {S}chr\"odinger operators},
  Invent. Math. \textbf{135} (1999), no.~2, 329--367. \MR{1666767
  (2000f:47060)}

\bibitem{LastSimon06}
\bysame, \emph{The essential spectrum of {S}chr\"odinger, {J}acobi, and {CMV}
  operators}, J. Anal. Math. \textbf{98} (2006), 183--220. \MR{2254485
  (2008a:47008)}

\bibitem{Lukic7}
Milivoje Lukic, \emph{On a conjecture for higher-order {S}zeg{\H o} theorems},
  to appear in Constructive Approximation. arXiv:1210.6953.

\bibitem{Rakhmanov83}
E.~A. Rakhmanov, \emph{On the asymptotics of the ratio of orthogonal
  polynomials, ii}, Math. USSR \textbf{46} (1983), 105--117.

\bibitem{OPUC1}
Barry Simon, \emph{Orthogonal polynomials on the unit circle. {P}art 1},
  American Mathematical Society Colloquium Publications, vol.~54, American
  Mathematical Society, Providence, RI, 2005, Classical theory.

\bibitem{OPUC2}
\bysame, \emph{Orthogonal polynomials on the unit circle. {P}art 2}, American
  Mathematical Society Colloquium Publications, vol.~54, American Mathematical
  Society, Providence, RI, 2005, Spectral theory.

\bibitem{Rice}
\bysame, \emph{Szeg{\H o}'s theorem and its descendants. {S}pectral theory for
  ${L{^{2}}}$ perturbations of orthogonal polynomials}, M. B. Porter Lectures,
  Princeton University Press, Princeton, NJ, 2011.

\bibitem{SimonZlatos05}
Barry Simon and Andrej Zlato{\v{s}}, \emph{Higher-order {S}zeg{\H o} theorems
  with two singular points}, J. Approx. Theory \textbf{134} (2005), no.~1,
  114--129. \MR{2137559 (2005m:42044)}

\end{thebibliography}

\providecommand{\bysame}{\leavevmode\hbox to3em{\hrulefill}\thinspace}
\providecommand{\MR}{\relax\ifhmode\unskip\space\fi MR }
\providecommand{\MRhref}[2]{%
  \href{http://www.ams.org/mathscinet-getitem?mr=#1}{#2}
}
\providecommand{\href}[2]{#2}

\end{document}